\begin{document}

\newtheorem{theorem}{Theorem}
\newtheorem{proposition}{Proposition}
\newtheorem{corollary}{Corollary}
\newtheorem{lemma}{Lemma}

\theoremstyle{definition}
\newtheorem{remark}{Remark}

\hyphenation{Lipschitz}

\sloppy
\title[Analysis of a Living Fluid Continuum Model]
{Analysis of a Living Fluid Continuum Model}

\author[F. Zanger]{Florian Zanger}
\address{Mathematisches Institut, Angewandte Analysis\\
         Heinrich-Heine-Uni\-ver\-sit\"at D\"usseldorf\\
         40204 D\"usseldorf, Germany}
\email{florian.zanger@hhu.de}

\author[H. L\"owen]{Hartmut L\"owen}
\address{Institut f\"ur Theoretische Physik II - Soft Matter\\
         Heinrich-Heine-Uni\-ver\-sit\"at D\"usseldorf\\
	 Universit\"atsstra{\ss}e 1\\
         40225 D\"usseldorf, Germany}
\email{hlowen@thphy.uni-duesseldorf.de}

\author[J. Saal]{J\"urgen Saal}
\address{Mathematisches Institut, Angewandte Analysis\\
         Heinrich-Heine-Uni\-ver\-sit\"at D\"usseldorf\\
         40204 D\"usseldorf, Germany}
\email{juergen.saal@hhu.de}

\date{April 7, 2016}
\thispagestyle{empty}
\parskip0.5ex plus 0.5ex minus 0.5ex
\bibliographystyle{plain}

\begin{abstract}
Generalized Navier-Stokes equations which were proposed recently to describe 
active turbulence in living fluids are analyzed rigorously.
Results on wellposedness and stability in the  
$L^2(\mathbb{R}^n)$-setting are derived. 
Due to the presence of a Swift-Hohenberg term global wellposedness
in a strong setting for arbitrary initial data in $L^2_\sigma(\mathbb{R}^n)$ 
is available. Based on the existence of global strong solutions,
results on linear and nonlinear (in-) stability for the 
disordered steady state and the manifold of ordered polar steady 
states are derived, depending on the involved parameters.
\end{abstract}
\maketitle

\section{Introduction}

There is a need to study analytical properties of generalized Navier-Stokes
equations which were recently proposed
\cite{Wensink-et-al:Meso-scale-turbulence,Dunkel_Heidenreich_PRL_2013,Dunkel_NJP_2013} for
 active soft matter (for recent reviews see
\cite{Romanchuk,Marchetti2013RMP,Menzel,our_RMP})  to
describe the dynamics of ``living fluids'' such as dense bacterial suspensions
at low Reynolds number \cite{Purcell:Life-at-low-Re}.
On a continuum scale, a living fluid flows with an internal speed
that is set by the internal self-propulsion velocity of the bacteria.
Generalized incompressible Navier-Stokes
equations were designed to describe the spontaneous formation of fluid vortices
on the mesoscale by including
higher order derivatives in the velocities entering into the stress tensor.
Indeed, at high density of the bacterial suspension, experiments reveal
the spontaneous formation of meso-scale vortices \cite{Wensink-et-al:Meso-scale-turbulence},
which is confirmed by particle-resolved simulations of
self-propelled particles \cite{Wensink_JPCM_2012} and is consistent with the
predictions of the Navier-Stokes
equations generalized to living fluids. Therefore these continuum equations
constitute an important general framework for flow of living or active fluids
and provide a minimal continuum model for swirling.
Though different to
ordinary turbulence, which occurs at high Reynolds number, this
phenomenon is often called ``active turbulence'' \cite{activeturbulence}.
Active turbulence, which occurs at small Reynolds number,
is characterized by scaling laws different to
ordinary turbulence \cite{Wensink-et-al:Meso-scale-turbulence,Bratanov}.

Therefore a thorough mathematical study of
this generalized Navier-Stokes system is highly desirable both from
a physical and a mathematical point of view.
This paper concerns the  
following minimal hydrodynamic model to describe the bacterial velocity in
the case of highly concentrated bacterial suspensions with negligible density
fluctuations considered on the domain $(0,\infty)\times\mathbb{R}^n$:
\begin{equation}
	\label{eqn:min-hyd-mod}
	\begin{array}{rl}
		v_t+\lambda_0v\cdot\nabla v & =  f-\nabla
		p+\lambda_1\nabla|v|^2-(\alpha+\beta|v|^2)v+\Gamma_0\Delta
		v-\Gamma_2 \Delta^2v\\
		{\mathrm{div\,}}v & =  0\\
		v(0)&=v_0
	\end{array}
\end{equation}
Here $v:(0,\infty)\times\mathbb R^n\to\mathbb{R}^n$ 
is the (vectorial) bacterial velocity field and
$p:(0,\infty)\times\mathbb R^n\to\mathbb{R}$ 
the (scalar) pressure. The first equation is
the conservation of momentum and the equation ${\mathrm{div\,}}v=0$
results from conservation of mass and
the assumption of constant density.

The generalized Navier-Stokes equations defined in  (\ref{eqn:min-hyd-mod}) were
originally proposed by Wensink et al.\ in \cite{Wensink-et-al:Meso-scale-turbulence} 
and then considered in Refs. \cite{Dunkel_Heidenreich_PRL_2013,Dunkel_NJP_2013}.
Clearly, for ${\lambda_0=1}$, $\lambda_1 = \beta = \Gamma_2 = 0$, and $\Gamma_0 >0$, 
the model reduces to the incompressible Navier-Stokes equations in $n$ spatial dimensions.
Let us briefly discuss the physics behind the various terms entering in (\ref{eqn:min-hyd-mod}).
The parameter $\lambda_0$ describes advection and nematic interactions and $\lambda_1$ is 
a prefactor in front of an active pressure contribution \cite{Dunkel_NJP_2013}.
The two parameters $\lambda_0$ and $\lambda_1$ depend on the hydrodynamic nature
of the swimmer, i.e.\ whether they are pushers or pullers \cite{Elgeti}, and
on the dimension $n\in\{2,3\}$.
The term involving the parameters
$\alpha$ and $\beta$  pushes the system towards rest with velocity $v=0$ if $\alpha>0$ and towards a
characteristic non-vanishing velocity of $\sqrt{-\alpha/\beta}$ if $\alpha<0$
and $\beta>0$ as in the Toner-Tu model \cite{Toner-Tu:Long-range} corresponding 
to a quartic Landau-type velocity potential. 
If the parameter $\Gamma_0$ is positive, it determines the
suspension's viscosity similar to the pure Navier-Stokes case. If it is negative, $\Gamma_2>0$
is required for stability reasons as in the traditional Swift-Hohenberg equation
\cite{Swift}, for a review see \cite{Emmerich}. By this fact, here we always
assume $\Gamma_2>0$.

A main objective of this note is to provide an analytical approach to the 
generalized Navier-Stokes equations (\ref{eqn:min-hyd-mod}) in the
$L^2(\mathbb{R}^n)$-setting. This will be performed in Section~\ref{subsec_wp}. 
There we will consider the general system
(\ref{eqn:min-hyd-mod-trans}), which includes the transformed 
systems about the steady states given in
Section~\ref{subsec_steadystates}. These are the {\em disordered
isotropic state} and the manifold of {\em globally
ordered polar states}.
The main results of Section~\ref{subsec_wp} are as follows.
Subsection~\ref{sec_lin} provides an approach to the linearized
equations. The corresponding linear operator admits a bounded
$H^\infty$-calculus, see Lemma~\ref{lem:bounded-Hinf-calc}.
Propositions~\ref{linstabdis} and \ref{linstabord} give precise
information on linear (in-) stability of the steady states depending on
the values of the involved parameters.
In Subsection~\ref{subsec_nlwp} we will prove global (strong) 
wellposedness for the 
generalized Navier-Stokes equations (\ref{eqn:min-hyd-mod}) in 
the $L^2$-setting, see Theorem~\ref{globstrongsol}. 
Based on the global solvability,
Subsection~\ref{subsec_stab} concerns nonlinear (in-) stability.
Theorem~\ref{nonlinstapdis} transfers the linear stability results
for the disordered state to the nonlinear situation.
Theorem~\ref{nonlinstabord} then proves a nonlinear instability
result for the ordered polar state. 

The fact that we can prove the existence of a global unique strong 
solution for arbitrary initial data in $L^2_\sigma(\mathbb{R}^n)$ for system
(\ref{eqn:min-hyd-mod}) of course is due to the presence of the
Swift-Hohenberg term $\Gamma_2\Delta^2 u$. 
It causes the nonlinear terms to appear less strong compared to 
the classical second order Navier-Stokes equations. 
We refer to \cite{galdi2011,sohr2001,wiegner1999,amann2000,cannone2004} 
and the references 
cited therein for more information on the classical Navier-Stokes
equations. 

We also remark that the purpose of this note is not  
to present best possible results in every direction.
The $L^2$-approach given here is merely a first
step towards a thorough analysis of the active fluids continuum model
(\ref{eqn:min-hyd-mod}) in a variety of further significant  situations. 
Further developments and
future projects, e.g.\ including fluid boundaries, 
are addressed in Section~\ref{concl_fut_dev}.

\section{Steady States}\label{subsec_steadystates}

We assume $\Gamma_2,\beta>0$ and $\alpha\in\mathbb{R}$. Then the 
following physically relevant stationary solutions appear:
\begin{equation}\label{disorderedstate}
	(v,p)=(0,p_0)
\end{equation}
with a pressure constant $p_0$ and, if $\alpha<0$, additionally
\begin{equation}\label{orderedstate}
	(v,p)=(V,p_0),
\end{equation}
where $V\in B_{\alpha,\beta}
:=\{x\in \mathbb R^n:\ |V|=\sqrt{-\alpha/\beta}\}$, i.e., $V$ 
denotes a constant vector with arbitrary orientation
and fixed swimming speed $|V|=\sqrt{-\alpha/\beta}$.

The steady state (\ref{disorderedstate}) corresponds to a {\em disordered
isotropic state} and (\ref{orderedstate}) to the manifold
$B_{\alpha,\beta}$ of {\em globally
ordered polar states}.

Note that mathematically there is a further manifold of 
stationary solutions given by
\begin{equation}
	v(x)=v_0,\qquad p(x)=p_0-(\alpha+\beta|v_0|^2)
	v_0\cdot x,\qquad x\in\Omega,\,p_0\in\mathbb{R},
\end{equation}
with $v_0\in\mathbb{R}^3$ arbitrary. For $v_0=0$ or $|v_0|=\sqrt{-\alpha/\beta}$
these solutions correspond to the above steady states (\ref{disorderedstate})
and (\ref{orderedstate}), respectively. For all other values of $v_0$
they are, however, physically not relevant since their pressure
takes arbitrary large negative values. Thus, in the sequel we will only
consider (\ref{disorderedstate}) and (\ref{orderedstate}).

\section{Wellposedness and Stability}\label{subsec_wp}

We perform an approach to the hydrodynamic model (\ref{eqn:min-hyd-mod})
in the $L^2(\mathbb R^n)$-setting. 
In order to include the steady states 
in our analysis we consider the following generalized system:
\begin{equation}
	\label{eqn:min-hyd-mod-trans}
	\begin{array}{r@{\ =\ }l}
		u_t+\lambda_0\left[(u+V)\cdot\nabla\right] u 
		+(M+\beta|u|^2)u-\Gamma_0\Delta
		u+\Gamma_2\Delta^2u+\nabla q& 
		  f+N(u), \\
	{\mathrm{div\,}}u &   0,\\    
	u(0)&u_0.
	\end{array}
\end{equation}
Here $q=p-\lambda_1 |v|^2$, $M\in\mathbb{R}^{n\times n}$ is a symmetric matrix, and
$N(u)=\sum_{j,k}a_{jk}u^ju^k$ with $(a_{jk})_{j,k=1}^n\subset \mathbb{R}^n$ 
is a quadratic nonlinear term. By setting
\begin{equation}\label{valuesds}
	V=0,\quad M=\alpha,\quad N(u)=0
\end{equation}
we obtain (\ref{eqn:min-hyd-mod}) for $u=v$, i.e.,\ the system corresponding to
the steady state (\ref{disorderedstate}) and by setting
\begin{equation}\label{valuesos}
	V\in B_{\alpha,\beta},\quad M=2\beta VV^t,\quad 
	N(u)=-\beta|u|^2V-2\beta(u\cdot V)u
\end{equation}
we obtain the system for $u=v-V$ corresponding to (\ref{orderedstate}).
Note that for the appearing parameters we always assume that 
$\lambda_0,\lambda_1,\Gamma_0,\alpha\in\mathbb{R}$ and that $\Gamma_2,\beta>0$.
Furthermore, dimension is always assumed to be $n\in\{2,3\}$.

For a domain $\Omega\subset \mathbb{R}^n$,
a Banach space $X$, and $1\le p\le \infty$ 
in the sequel $L^p(\Omega,X)$ 
denotes the standard Bochner-Lebesgue space with norm 
\[
	\|u\|_{L^p(X)}=\left(\int_\Omega\|u(x)\|_X^p\,dx\right)^{1/p},
\]
if $1\le p<\infty$ and
$\|u\|_{L^\infty(X)}={\mathrm{ess\,sup}}_{x\in\Omega}\|u\|_X$ if $p=\infty$. 
In case that $\Omega=X=\mathbb{R}^n$ its subspace
of solenoidal functions is denoted by
\[
	L^p_\sigma(\mathbb R^n):=\{v\in L^p(\mathbb R^n);\
	{\mathrm{div\,}}v=0\}.
\]
Note that for $1<p<\infty$ we have the Helmholtz decomposition
\[
	L^p(\mathbb{R}^n)=L^p_\sigma(\mathbb{R}^n)\oplus G_p(\mathbb{R}^n)
\]
with $G_p(\mathbb{R}^n)=\{\nabla p;\ p\in {\mathscr{D}}'(\mathbb{R}^n),\ \nabla p\in L^p(\mathbb{R}^n)\}$.
The associated Helmholtz projector onto $L^p_\sigma(\mathbb{R}^n)$ is represented
as 
\[
	P=\mathcal{F}^{-1}\left(I-\frac{\xi\xi^T}{|\xi|^2}\right)\mathcal{F},
\]
where $\mathcal{F}$ denotes the Fourier transformation and $I$ the identity 
matrix in $\mathbb{R}^n$. 

The symbol $W^{k,p}(\Omega,X)$, $k\in\mathbb{N}_0$, $1\le p\le\infty$, stands
for the standard Sobolev space of $k$-times differentiable functions in
$L^p(\mathbb{R}^n)$. Its norm is given as
\[
	\|f\|_{W^{k,p}(X)}:=\biggl(\sum_{|\alpha|\le k}
	\|\partial^\alpha f\|_{L^p(X)}^p\biggr)^{1/p}
\]
with the usual modification if $p=\infty$. 
The fractional order Sobolev resp.\ Sobolev-Slobodeckij
spaces are defined by complex resp.\ real
interpolation as
\begin{align*}
	W^{k+t,p}(\Omega,X)
	&:=\left[W^{k,p}(\Omega,X),W^{k+1,p}(\Omega,X)\right]_{t},\\
	W^{k+t}_p(\Omega,X)
	&:=\left(W^{k,p}(\Omega,X),W^{k+1,p}(\Omega,X)\right)_{t}
\end{align*}
for $t\in(0,1)$ and $k\in\mathbb{N}_0$. 
For $p=2$ and $s\ge 0$ we use the notation $H^s(\Omega,X):=W^{s,2}(\Omega,X)$
and we frequently write $L^p(\Omega)$, $W^{s,p}(\Omega)$, and 
$W^{s}_p(\Omega)$ if $X=\mathbb{R}^n$. Also note that 
$H^s(\Omega)=W^{s,2}(\Omega)=W^{s}_2(\Omega)$, but that
$W^{s,p}(\Omega,X)\neq
W^{s}_p(\Omega,X)$ in general.
Finally, $\mathscr{L}(X,Y)$ denotes the space of all bounded and 
linear operators from the space $X$ into the space $Y$,
we write $\mathscr{L}(X)$ if $X=Y$, and
$\sigma(A)$ denotes the spectrum of a linear operator
$A:D(A)\subset X\to X$.

\subsection{Linear Theory}\label{sec_lin}

In this subsection we consider the linearized system
\begin{equation}
	\label{lflin}
	\begin{array}{r@{\ =\ }ll}
		u_t+\lambda_0(V\cdot\nabla) u 
		+Mu-\Gamma_0\Delta
		u+\Gamma_2\Delta^2u+\nabla q& 
		  f&\text{in } (0,\infty)\times\mathbb{R}^n, \\
	{\mathrm{div\,}}u &   0&\text{in } (0,\infty)\times\mathbb{R}^n,\\    
	u(0)&u_0&\text{in }\mathbb{R}^n.
	\end{array}
\end{equation}
Thanks to $\Gamma_2>0$ the operator
\[
	A_{_{SH}}u:=\Gamma_2\Delta^2u,\quad 
	u\in D(A_{_{SH}}):=W^{4,p}(\mathbb{R}^n)\cap L^p_\sigma(\mathbb{R}^n),
\]
admits a bounded $H^\infty$-calculus on 
$L^p_\sigma(\mathbb{R}^n)$
with $H^\infty$-angle $\phi_{A_{_{SH}}}^\infty=0$ 
for $p\in(1,\infty)$. This follows as an easy consequence of Mikhlin's
multiplier theorem, for instance. See e.g.\ \cite{denk2003}
for an introduction to the notion of a bounded $H^\infty$-calculus.
Since every other term appearing in (\ref{lflin}), more precisely 
the operator
\begin{equation}\label{eq:operator-B}
	Bu:=\lambda_0(V\cdot\nabla) u
		+PMu-\Gamma_0\Delta u,
\end{equation}
is of lower order, by a standard perturbation argument we 
immediately deduce 
\begin{lemma}\label{lem:bounded-Hinf-calc}
Let $1<p<\infty$. There is an $\omega>0$ such that the operator
$\omega+A_{_{LF}}$, where
\begin{equation}\label{fulllinop}
	A_{_{LF}}u:=(A_{_{SH}}+B) u,\quad 
	u\in D(A_{_{LF}}):=W^{4,p}(\mathbb{R}^n)\cap L^p_\sigma(\mathbb{R}^n),
\end{equation}
admits a bounded $H^\infty$-calculus on 
$L^p_\sigma(\mathbb{R}^n)$
with $H^\infty$-angle $\phi_{\omega+A_{_{LF}}}^\infty<\pi/2$. 
\end{lemma}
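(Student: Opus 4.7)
The plan is to invoke a standard perturbation theorem for the bounded $H^\infty$-calculus (as found, e.g., in \cite{denk2003}), exploiting that $A_{_{SH}}$ already has a bounded $H^\infty$-calculus of angle $0$ on $L^p_\sigma(\mathbb{R}^n)$ and that $B$ is of strictly lower order. Such a theorem reads roughly as follows: if $A$ is a sectorial operator on a UMD space $X$ admitting a bounded $H^\infty$-calculus of angle $\phi_A^\infty < \pi/2$, and if $B\colon D(A)\to X$ satisfies a relative bound
\[
	\|Bu\|_X \le \eta\,\|Au\|_X + C_\eta\,\|u\|_X
\]
with $\eta$ sufficiently small, then for a suitable $\omega>0$ the shifted perturbed operator $\omega+A+B$ admits a bounded $H^\infty$-calculus on $X$ of angle strictly less than $\pi/2$. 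The ambient space $L^p_\sigma(\mathbb{R}^n)$ is UMD for $1<p<\infty$ as a closed subspace (namely the range of the bounded projection $P$) of $L^p(\mathbb{R}^n)$, so the abstract framework applies.

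The key step is therefore to verify the required relative bound with arbitrarily small $\eta$. Since $A_{_{SH}}$ has constant-coefficient Fourier symbol $\Gamma_2 |\xi|^4 I$, Mikhlin's multiplier theorem together with the boundedness of $P$ yields
\[
	D(A_{_{SH}}) = W^{4,p}(\mathbb{R}^n)\cap L^p_\sigma(\mathbb{R}^n) \hookrightarrow W^{2,p}(\mathbb{R}^n),
\]
and the standard interpolation inequality
$\|u\|_{W^{2,p}}\le \delta\,\|A_{_{SH}}u\|_{L^p}+C_\delta\,\|u\|_{L^p}$ holds for arbitrary $\delta>0$. Each summand of $Bu=\lambda_0(V\cdot\nabla)u+PMu-\Gamma_0\Delta u$ involves derivatives of order at most two with constant coefficients (with $P$ bounded on $L^p$), so $\|Bu\|_{L^p}\le c\,\|u\|_{W^{2,p}}$, and consequently $\eta=c\delta$ can be made as small as desired.

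Applying the cited perturbation theorem then produces the bounded $H^\infty$-calculus for $\omega+A_{_{LF}}$ on $L^p_\sigma(\mathbb{R}^n)$ with angle $\phi^\infty_{\omega+A_{_{LF}}}<\pi/2$; the shift $\omega$ is dictated both by the need to absorb the lower-order constants $C_\eta$ and to guarantee sectoriality in the presence of the skew-symmetric transport term $\lambda_0(V\cdot\nabla)$ and the possibly signed $M$ and $\Gamma_0$. The main technical obstacle I expect is matching the concrete form of $B$ to the precise relative-boundedness hypothesis of the chosen perturbation theorem; should that prove inconvenient one may instead compute the full Fourier symbol of $A_{_{LF}}$ on $L^p_\sigma$ directly and verify the Mikhlin-type conditions on a sector, since the coefficients are constant and the symbol is explicit.
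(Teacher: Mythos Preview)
Your proposal is correct and follows exactly the route the paper takes: the paper simply remarks that $A_{_{SH}}$ has a bounded $H^\infty$-calculus of angle $0$ (by Mikhlin) and that $B$ is of lower order, and then invokes a ``standard perturbation argument'' to conclude. In fact you supply more detail than the paper does, spelling out the relative-boundedness estimate and the role of the shift~$\omega$.
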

As a consequence, cf.\ \cite{denk2003}, $-A_{_{LF}}$ is the generator of an analytic
$C_0$-semigroup on $L^p_\sigma(\mathbb{R}^n)$ and it has maximal regularity:
\begin{corollary}\label{max_reg}
Let $1<p<\infty$, $T\in(0,\infty)$. For $f\in L^p((0,T),L^p_\sigma(\mathbb{R}^n))$
and $u_0\in W^{4-4/p}_p(\mathbb{R}^n) \cap L^p_\sigma(\mathbb{R}^n)$ there exists a unique solution $(u,q)$
of (\ref{lflin}) satisfying
\begin{align*}
	&\|u\|_{W^{1,p}((0,T),L^p)}
	+\|u\|_{L^{p}((0,T),W^{4,p})}
	+\|\nabla q\|_{L^{p}((0,T),L^p)}\\
	&\le C\left(\|f\|_{L^{p}((0,T),L^p)}
	+\|u_0\|_{W^{4-4/p}_p}\right)
\end{align*}
with $C>0$ independent of $u,q,f,u_0$.
\end{corollary}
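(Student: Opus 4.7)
The plan is to deduce Corollary~\ref{max_reg} from Lemma~\ref{lem:bounded-Hinf-calc} via the standard abstract link between a bounded $H^\infty$-calculus of angle strictly less than $\pi/2$ on a UMD space and maximal $L^p$-regularity. First I would recast (\ref{lflin}) as an abstract Cauchy problem on $L^p_\sigma(\mathbb{R}^n)$ by applying the Helmholtz projector $P$. The resulting equation is $u_t + A_{_{LF}}u = Pf$, $u(0) = u_0$, where one uses that $V$ is a constant vector and $u$ is solenoidal, so $(V\cdot\nabla)u$, $\Delta u$, and $\Delta^2 u$ already belong to $L^p_\sigma(\mathbb{R}^n)$ without further projection; only the zero-order $M$-term needs to be projected, exactly as built into (\ref{eq:operator-B}). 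Note also that $Pu_0 = u_0$ since $u_0\in L^p_\sigma(\mathbb{R}^n)$.

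Second, I would invoke Lemma~\ref{lem:bounded-Hinf-calc}: $\omega + A_{_{LF}}$ admits a bounded $H^\infty$-calculus on $L^p_\sigma(\mathbb{R}^n)$ with $H^\infty$-angle strictly below $\pi/2$. Since $L^p(\mathbb{R}^n)$ is UMD for $1<p<\infty$ and this property is inherited by closed subspaces, so is $L^p_\sigma(\mathbb{R}^n)$. The Kalton-Weis theorem (see \cite{denk2003}) then yields maximal $L^p$-regularity for $\omega + A_{_{LF}}$ on any bounded interval $(0,T)$, and the bounded multiplier $e^{\omega t}$ on $L^p((0,T))$ transfers the property back to $A_{_{LF}}$ itself. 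The optimal trace space of initial data is identified as the real interpolation space
\[
        \bigl(L^p_\sigma(\mathbb{R}^n),\, D(A_{_{LF}})\bigr)_{1-1/p,\, p} = W^{4-4/p}_p(\mathbb{R}^n) \cap L^p_\sigma(\mathbb{R}^n),
\]
the latter identification being standard since the principal part of $A_{_{LF}}$ is the full-space bi-Laplacian and the lower order perturbations in $B$ do not affect real interpolation between $L^p_\sigma$ and $W^{4,p}\cap L^p_\sigma$.

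The pressure is then recovered by applying $I-P$ to (\ref{lflin}). As the biharmonic, Laplace, and constant-coefficient transport terms all preserve $L^p_\sigma$, only the $f$- and $M$-contributions produce a nontrivial gradient part, and
\[
        \nabla q = (I-P)\bigl[f - Mu\bigr]
\]
lies in $L^p((0,T), L^p(\mathbb{R}^n))$ with the required estimate because $I-P$ is bounded on $L^p$ and $u \in L^p((0,T), W^{4,p})$ already controls $Mu$ in $L^p$. Uniqueness of the pair $(u,\nabla q)$ follows from the uniqueness of the solution of the abstract Cauchy problem together with the uniqueness of the Helmholtz decomposition on $\mathbb{R}^n$.

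The main (mildly nontrivial) obstacle is the identification of the trace space with $W^{4-4/p}_p(\mathbb{R}^n)\cap L^p_\sigma(\mathbb{R}^n)$; once the Helmholtz reformulation is in place, every other step reduces to directly quoting the $H^\infty$-calculus/UMD machinery already set up by Lemma~\ref{lem:bounded-Hinf-calc}.
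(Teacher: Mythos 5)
Your proposal is correct and follows the same route as the paper, which states Corollary~\ref{max_reg} as an immediate consequence of the bounded $H^\infty$-calculus of Lemma~\ref{lem:bounded-Hinf-calc} with a reference to \cite{denk2003} and no further details. You merely spell out the standard intermediate steps (Helmholtz reformulation, the $H^\infty$-calculus/UMD implication of maximal $L^p$-regularity, identification of the trace space $(L^p_\sigma, D(A_{_{LF}}))_{1-1/p,p}=W^{4-4/p}_p\cap L^p_\sigma$, and pressure recovery via $I-P$), all of which are accurate; the only cosmetic point is that $(I-P)f=0$ since $f$ is already solenoidal, so $\nabla q=-(I-P)Mu$.
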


To obtain preciser information on the spectrum 
of the operator $A_{_{LF}}$ we apply
Fourier transformation to (\ref{fulllinop}) to the result
that
\[
	\sigma_{A_{_{LF}}}(\xi)
	=\mathcal{F} A_{_{LF}}\mathcal{F}^{-1}
	=\Gamma_2|\xi|^4+\Gamma_0 |\xi|^2+\sigma_P(\xi)M
	+i\lambda_0 V\cdot\xi,\quad \xi\in\mathbb{R}^n,
\]
with $\sigma_P(\xi)=(1-\xi\xi^t/|\xi|^2)$ the symbol of the
Helmholtz projector.
We first consider the disordered state (\ref{disorderedstate}). We 
set $A_d:=A_{_{LF}}$ in this case. Then
according to (\ref{valuesds}) the above expression takes the form
\[
	\sigma_{A_d}(\xi)
	=\Gamma_2|\xi|^4+\Gamma_0 |\xi|^2+\alpha,\quad \xi\in\mathbb{R}^n.
\]
Calculating the intersection points of the parabola in $s=|\xi|^2$ 
we obtain
\[
	s_\pm^2=\frac{-\Gamma_0}{\Gamma_2}\left(\frac12\pm
	\sqrt{\frac14-\frac{\alpha\Gamma_2}{\Gamma_0^2}}\right).
\]
Consequently, if $\Gamma_0<0$ there is an unstable band of modes for
$s^2\in(s_-^2,s_+^2)$ provided this interval is nonempty. In this case
the spectral bound
$s(A_d)=\sup\{\mathrm{Re\,} z;\ z\in\sigma(A_d)\}$ of $-A_d$ is positive.
Since it is well known that for analytic $C_0$-semigroups
spectral bound of the generator and growth bound of the 
generated semigroup coincide
\cite{nagel86}, we deduce that $\exp(-tA_d)$ is exponentially
unstable precisely if $\Gamma_0<0$ and $4\alpha<
\Gamma_0^2/\Gamma_2$, 
or if $\Gamma_0\ge 0$ and $\alpha<0$.
This leads to the following result.
\begin{proposition}\label{linstabdis}
Let $\Gamma_2>0$, $\beta>0$, and $1<p<\infty$. If $\Gamma_0<0$, then
the disordered state~(\ref{disorderedstate}) is linearly stable 
if and only if $4\alpha\ge \Gamma_0^2/\Gamma_2$.
If $\Gamma_0\ge 0$, then the disordered state~(\ref{disorderedstate}) is stable 
if and only if $\alpha\ge 0$. To be precise, 
the semigroup $(\exp(-tA_d))_{t\ge 0}$ on $L^p_\sigma(\mathbb{R}^n)$ corresponding to the 
disordered state (\ref{disorderedstate}) is
\begin{enumerate}
\item[(1)]{
exponentially stable if $\Gamma_0<0$ and $4\alpha > \Gamma_0^2/\Gamma_2$,
or if $\Gamma_0\ge 0$ and $\alpha> 0$;}
\item[(2)]{
asymptotically stable if $\Gamma_0<0$ and $4\alpha=
\Gamma_0^2/\Gamma_2$, or if $\Gamma_0\ge 0$ and $\alpha= 0$;}
\item[(3)]{
exponentially unstable if $\Gamma_0<0$ and $4\alpha <
\Gamma_0^2/\Gamma_2$, or if $\Gamma_0\ge 0$ and $\alpha< 0$.}
\end{enumerate}
\end{proposition}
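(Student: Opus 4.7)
The plan is to combine the analyticity of the semigroup, furnished by Lemma~\ref{lem:bounded-Hinf-calc}, with an explicit Fourier-side computation of $\sigma(A_d)$. Specialising the general operator in (\ref{eqn:min-hyd-mod-trans}) to the disordered-state parameters (\ref{valuesds}) (that is, $V=0$, $M=\alpha I$, $N\equiv 0$), the lemma implies that $-A_d$ generates an analytic $C_0$-semigroup on $L^p_\sigma(\mathbb{R}^n)$. For such semigroups the growth bound coincides with the spectral bound of the generator (see \cite{nagel86}), so
\[
\omega_0(-A_d)= -\inf\bigl\{\mathrm{Re\,} z : z\in\sigma(A_d)\bigr\},
\]
and the exponential regimes (1) and (3) of the proposition reduce to determining the sign of $\inf\sigma(A_d)$.

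For the spectral computation I would exploit that $V=0$ and that $M=\alpha I$ commutes with the Helmholtz projector, so that the Fourier symbol of $A_d$ acts on every solenoidal fibre as the scalar
\[
\sigma_{A_d}(\xi)=\Gamma_2|\xi|^4+\Gamma_0|\xi|^2+\alpha,\qquad \xi\in\mathbb{R}^n.
\]
A standard Fourier-multiplier argument then identifies $\sigma(A_d)$ with the closure of the range of this symbol on $\mathbb{R}^n$; minimising in $s=|\xi|^2\ge 0$, as carried out in the paragraphs preceding the proposition, yields
\[
\inf\sigma(A_d)=\begin{cases}\alpha & \text{if } \Gamma_0\ge 0,\\ \alpha-\Gamma_0^2/(4\Gamma_2) & \text{if } \Gamma_0<0.\end{cases}
\]
Strict positivity of this infimum gives $\omega_0<0$ and hence the exponential stability of~(1), while strict negativity exposes an open unstable band of Fourier modes and thus $\omega_0>0$, i.e.\ the exponential instability of~(3).

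The main obstacle is the borderline case~(2), where $\inf\sigma(A_d)=0$ and so the spectral bound alone neither implies decay nor rules out polynomial growth. To treat it I would work directly with the Fourier multiplier representation of the semigroup. In both borderline configurations $\sigma_{A_d}\ge 0$ with zero set of Lebesgue measure zero (the origin for $\Gamma_0\ge 0,\ \alpha=0$, and the sphere $|\xi|^2=-\Gamma_0/(2\Gamma_2)$ for $\Gamma_0<0,\ 4\alpha=\Gamma_0^2/\Gamma_2$), so that $0\le e^{-t\sigma_{A_d}(\xi)}\le 1$ for all $t\ge 0$ and $e^{-t\sigma_{A_d}(\xi)}\to 0$ for almost every $\xi$. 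For $p=2$, Plancherel together with Lebesgue's dominated convergence theorem, with integrable majorant $|\widehat{u_0}|^2$, immediately gives $\|\exp(-tA_d)u_0\|_{L^2}\to 0$ for every $u_0\in L^2_\sigma(\mathbb{R}^n)$. For general $p\in(1,\infty)$ the same conclusion is obtained by first establishing the decay on a dense class of Schwartz solenoidal data through kernel estimates for $\mathcal{F}^{-1}(e^{-t\sigma_{A_d}})$ localised near the degeneracy set, and then extending to all of $L^p_\sigma(\mathbb{R}^n)$ by an $\varepsilon/3$-argument based on the uniform-in-$t$ $L^p$-boundedness of $\exp(-tA_d)$ furnished by Mikhlin's theorem applied to the bounded multiplier $e^{-t\sigma_{A_d}}$.
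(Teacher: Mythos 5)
Your treatment of parts (1) and (3) coincides with the paper's: compute the scalar symbol $\sigma_{A_d}(\xi)=\Gamma_2|\xi|^4+\Gamma_0|\xi|^2+\alpha$, minimise over $s=|\xi|^2\ge0$, and use that spectral bound and growth bound coincide for the analytic semigroup generated by $-A_d$. Your $p=2$ argument for the borderline case (2) (Plancherel plus dominated convergence, using that the symbol vanishes only on a null set) is correct and in fact more self-contained than the paper's, which simply observes that in case (2) the semigroup is bounded analytic and invokes the asymptotic stability of such semigroups.

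The gap is in your extension of (2) to $p\neq2$ in the regime $\Gamma_0<0$, $4\alpha=\Gamma_0^2/\Gamma_2$. There $\sigma_{A_d}(\xi)=\Gamma_2\bigl(|\xi|^2-r^2\bigr)^2$ with $r^2=-\Gamma_0/(2\Gamma_2)>0$, so the symbol degenerates on a \emph{sphere} rather than at the origin, and the family $e^{-t\sigma_{A_d}}$ is \emph{not} uniformly Mikhlin: with $u=|\xi|^2-r^2$ one computes
\[
	|\xi|\,\bigl|\nabla_\xi e^{-t\sigma_{A_d}(\xi)}\bigr|
	=4\Gamma_2\,t\,|u|\,|\xi|^2\,e^{-\Gamma_2 t u^2},
\]
whose supremum (attained at $|u|\sim(\Gamma_2 t)^{-1/2}$, $|\xi|\approx r$) grows like $t^{1/2}$. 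Hence Mikhlin's theorem does not deliver the uniform-in-$t$ bound $\sup_{t\ge0}\|\exp(-tA_d)\|_{\mathscr{L}(L^p_\sigma)}<\infty$ on which your $\varepsilon/3$-argument rests; the multiplier concentrates on an annulus of shrinking width around $|\xi|=r$, and a pointwise bound $0\le e^{-t\sigma_{A_d}}\le1$ says nothing about its $L^p$ operator norm for $p\neq2$, $n\ge2$. (In the other borderline case $\Gamma_0\ge0$, $\alpha=0$ the symbol vanishes only at $\xi=0$, $e^{-t\sigma_{A_d}}$ is essentially a scaling family with uniformly bounded Mikhlin constants, and your argument does go through.) The same degeneracy obstructs the obvious route to sectoriality of $A_d$ itself, since the resolvent symbol $\lambda/(\lambda+\sigma_{A_d}(\xi))$ has Mikhlin constants of order $\lambda^{-1/2}$ as $\lambda\downarrow0$. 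To close case (2) for $p\neq2$ and $\Gamma_0<0$ you must first establish uniform boundedness of the semigroup by some other means (or, as the paper does, take it as given and then quote strong stability of bounded analytic semigroups, e.g.\ via the Arendt--Batty--Lyubich--V\~u theorem together with $\sigma(A_d)\cap i\mathbb{R}\subseteq\{0\}$ and the absence of the eigenvalue $0$); the uniform bound, not the pointwise decay of the symbol, is the crux.
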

\begin{proof}
It remains to prove (2), the other assertions are obvious by the
discussion above.
On the other hand, in the situation of (2) we see that 
$(\exp(-tA_d))_{t\ge 0}$ is a bounded analytic $C_0$-semigroup
on $L^p_\sigma(\mathbb{R}^n)$, which are known to be asymptotically stable,
see \cite{nagel86}.
\end{proof}

Next, we consider the manifold $B_{\alpha,\beta}$ of ordered polar states
(\ref{orderedstate}). In this case we set $A_o:=A_{_{LF}}$ and the symbol
of this operator according to (\ref{valuesos}) reads as
\[
	\sigma_{A_o}(\xi)
	=\Gamma_2|\xi|^4+\Gamma_0 |\xi|^2+2\beta\sigma_P(\xi)VV^t
	+i\lambda_0 V\cdot\xi,\quad \xi\in\mathbb{R}^n,
\]
with $V\in B_{\alpha,\beta}$. Note that the matrix 
$\sigma_P(\xi)VV^t$ is 
positive semidefinite and that zero is an eigenvalue by the fact
that $V^t x=0$ if $x\in\mathbb{R}^n\setminus\{0\}$ is perpendicular to $V$.
Choosing $x,\xi\in \{V\}^\perp$ such that $|x|=1$ 
and that $|\xi|$ is small enough, we can always achieve that
\[
	x^t\sigma_{A_o}(\xi)x
	=\Gamma_2|\xi|^4+\Gamma_0 |\xi|^2<0,
\]
provided that $\Gamma_0<0$. Thus, here we obtain the following result.
\begin{proposition}\label{linstabord}
Let $1<p<\infty$, $\Gamma_2>0$, $\beta>0$, and $\alpha<0$. The
ordered polar state~(\ref{orderedstate}) is linearly stable
if and only if $\Gamma_0\ge 0$. To be precise, 
the semigroup $(\exp(-tA_o))_{t\ge 0}$ corresponding to the 
ordered state~(\ref{orderedstate}) is
\begin{enumerate}
\item[(1)]{
exponentially unstable on $L^p_\sigma(\mathbb{R}^n)$ if $\Gamma_0<0$;}
\item[(2)]{
asymptotically stable on $L^2_\sigma(\mathbb{R}^n)$ if $\Gamma_0\ge 0$.}
\end{enumerate}
\end{proposition}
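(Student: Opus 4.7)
The plan is to read off both assertions from the Fourier representation of $A_o$ set up immediately before the proposition: on the Fourier side $A_o$ is the matrix multiplier with symbol $\sigma_{A_o}(\xi)$ restricted to the subspace $\xi^\perp$ (the solenoidal constraint), and by Lemma~\ref{lem:bounded-Hinf-calc} the semigroup $(\exp(-tA_o))_{t\ge 0}$ is analytic, so its exponential growth bound equals the spectral bound of $-A_o$.

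For part (1), I would exhibit a single point of $\sigma(A_o)$ with strictly negative real part. The natural choice is $\xi_{\varepsilon}=\varepsilon V/|V|$ for small $\varepsilon>0$: then $\sigma_P(\xi_\varepsilon)V=0$, so the rank-one part of the symbol vanishes and $\sigma_{A_o}(\xi_\varepsilon)|_{\xi_\varepsilon^\perp}$ is simply the scalar $\Gamma_2\varepsilon^4+\Gamma_0\varepsilon^2+i\lambda_0|V|\varepsilon$ times the identity on the $(n-1)$-dimensional space $\xi_\varepsilon^\perp$. Its real part is negative for sufficiently small $\varepsilon$ when $\Gamma_0<0$, and a standard approximate-eigenvector construction via Fourier localization around $\xi_\varepsilon$ places this value into $\sigma(A_o)$ on $L^p_\sigma(\mathbb{R}^n)$ for every $1<p<\infty$. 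Combined with analyticity, this yields exponential instability.

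For part (2), I would work on $L^2_\sigma(\mathbb{R}^n)$ and combine Plancherel with dominated convergence. The key algebraic step is the identity
\[
\sigma_P(\xi)VV^t x=(\sigma_P(\xi)V)(\sigma_P(\xi)V)^t x,\qquad x\in\xi^\perp,
\]
which follows because $V^t x=(\sigma_P(\xi)V)^t x$ for $x\perp\xi$. Thus $\sigma_{A_o}(\xi)|_{\xi^\perp}$ is a complex scalar multiple of the identity plus a real symmetric positive semidefinite rank-one perturbation, and is therefore normal. Consequently
\[
\bigl\|\exp\bigl(-t\,\sigma_{A_o}(\xi)|_{\xi^\perp}\bigr)\bigr\|
=\exp\bigl(-t(\Gamma_2|\xi|^4+\Gamma_0|\xi|^2)\bigr)\le 1
\]
for all $t\ge 0$ when $\Gamma_0\ge 0$, and it tends to $0$ as $t\to\infty$ for every $\xi\ne 0$. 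Plancherel applied to $\|\exp(-tA_o)u\|_{L^2}^2$ together with the integrable dominant $|\hat u(\xi)|^2$ then delivers $\|\exp(-tA_o)u\|_{L^2}\to 0$ for every $u\in L^2_\sigma(\mathbb{R}^n)$.

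The main obstacle I expect is the clean normality statement in part (2): treating $\sigma_P(\xi)VV^t$ naively as a non-symmetric matrix on all of $\mathbb{R}^n$ would force delicate estimates on non-normal matrix exponentials near $\xi=0$, whereas restriction to $\xi^\perp$ collapses it to a manifestly self-adjoint rank-one object with explicit eigenvalues. A secondary technical point is verifying in part (1) that the chosen value truly belongs to $\sigma(A_o)$ on $L^p_\sigma$ and not merely to the numerical range of the symbol; the choice $\xi\parallel V$ makes this automatic by turning the symbol into a genuine scalar on the whole of $\xi_\varepsilon^\perp$, so that the instability is insensitive to the $(n=2)$ versus $(n=3)$ dichotomy that would otherwise complicate the $\{V\}^\perp$-based argument sketched above the proposition.
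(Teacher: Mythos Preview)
Your proposal is correct and, for part~(2), takes a genuinely different route from the paper. For~(1), the paper's pre-proposition discussion chooses $\xi\in\{V\}^\perp$ and then needs an eigenvector $x\in\{V\}^\perp\cap\xi^\perp$, which is only available when $n=3$; your choice $\xi\parallel V$ kills the rank-one term via $\sigma_P(\xi)V=0$ and works uniformly in $n\in\{2,3\}$, exactly as you observed. For~(2), the paper does \emph{not} use the Fourier representation at all: it derives an $L^2$ energy identity by testing the linear equation with $v(t)$, obtaining contractivity together with $\int_0^\infty\|\Delta v(t)\|_2^2\,dt<\infty$; a further $BC^1$ bound on $t\mapsto\|\Delta v(t)\|_2^2$ for $v_0\in H^4$ then forces $\|\Delta v(t)\|_2\to 0$, and density of $\Delta(H^4\cap L^2_\sigma)$ in $L^2_\sigma$ plus contractivity finishes the argument. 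Your approach is more direct here: the observation that $\sigma_P(\xi)VV^t$ restricted to $\xi^\perp$ equals $(\sigma_P(\xi)V)(\sigma_P(\xi)V)^t$ and is therefore symmetric makes the symbol normal on each fibre, so the fibrewise semigroup norm is exactly $\exp(-t(\Gamma_2|\xi|^4+\Gamma_0|\xi|^2))$ and a one-line Plancherel/dominated-convergence argument suffices. The paper's energy method, by contrast, is more robust in that it does not rely on the explicit diagonalizability of the symbol and would adapt more readily to the variable-coefficient or boundary-value settings discussed in the concluding section.
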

\begin{proof}
Assertion (1) is clear.
To see (2) first observe that due to the occurrence of the term
$i\lambda_0 V\cdot\xi$, $(e^{-A_o t})_{t\ge 0}$ is not a bounded
analytic semigroup. Hence we cannot argue as for $A_d$ to deduce
asymptotic stability. Instead, for (2) we restrict ourselves to the case
$p=2$ and proceed as in \cite{hhms2009}: For $v_0\in H^1(\mathbb{R}^n)\cap L^2_\sigma(\mathbb{R}^n)$,
$v(t):=\exp(-tA_o)v_0$ solves
\begin{equation*}
v_t(t)+\Gamma_2\Delta^2v(t)-\Gamma_0\Delta v(t)+\lambda_0(V\cdot\nabla)v(t)+2\beta PVV^tv(t)=0.
\end{equation*}
Multiplication in $L^2(\mathbb{R}^n)$ with $v(t)$ and integration from $t=0$ to $T$ yields
\begin{equation*}
\|v(T)\|_{L^2}^2
+ 2\!\int_0^T\!\!
\left(
  \Gamma_2\|\Delta v(t)\|_{L^2}^2
  +\Gamma_0\|\nabla v(t)\|_{L^2}^2
  +2\beta\|V\cdot v(t)\|_{L^2}^2
\right)
dt
=
\|v_0\|_{L^2}^2
\end{equation*}
for every $T>0$. This has two consequences: First, the semigroup $\exp(-tA_o)$ is
contractive since it is strongly continuous and since
$H^1(\mathbb{R}^n)\cap L^2_\sigma(\mathbb{R}^n)$ is dense in $L^2_\sigma(\mathbb{R}^n)$, and second, 
we have
\begin{equation}\label{intfinite}
\int_0^\infty \|\Delta v(t)\|_{L^2}^2 dt < \infty.
\end{equation}
If $v_0\in H^4(\mathbb{R}^n)$, using the contractiveness we obtain
\begin{align*}
\left|\frac{d}{dt}\|\Delta v(t)\|_{L^2}^2\right|
&=\left|2\langle\Delta v(t),\Delta v_t(t)\rangle\right|\\
&=\left|-2\langle\Delta^2\exp(-tA_o)v_0,A_o v(t)\rangle\right|
  \leq C\|v_0\|_{H^4}^2,
\end{align*}
i.e. $\|\Delta v(\cdot)\|_{L^2}^2\in BC^1(0,\infty)$. Together 
with (\ref{intfinite}) this implies
\begin{equation}\label{laplacetozero}
\lim_{t\to\infty} \|\exp(-tA_o)\Delta v_0\|_{L^2}
= \lim_{t\to\infty} \|\Delta v(t)\|_{L^2} = 0.
\end{equation}
In order to prove asymptotic stability we have to show that for every
$u_0\in L^2_\sigma(\mathbb{R}^n)$, $u(t):=\exp(-tA_o)u_0$ satisfies
\begin{equation*}
\lim_{t\to\infty}\|u(t)\|_{L^2}=0.
\end{equation*}
Considering that $\{\Delta w;\, w\in H^4(\mathbb{R}^n)\cap L^2_\sigma(\mathbb{R}^n)\}$
is dense in $L^2_\sigma(\mathbb{R}^n)$ we can always find
$v_0\in H^4(\mathbb{R}^n)\cap L^2_\sigma(\mathbb{R}^n)$ with $\|u_0-\Delta v_0\|_{L^2}$
arbitrary small. Making once more use of the contractiveness of the
semigroup we obtain
\begin{align*}
\|u(t)\|_{L^2}
&\leq
 \|\exp(-tA_o)u_0-\exp(-tA_o)\Delta v_0\|_{L^2}
 +\|\exp(-tA_o)\Delta v_0\|_{L^2} \\
&\leq
 \|u_0-\Delta v_0\|_{L^2}+\|\exp(-tA_o)\Delta v_0\|_{L^2},
\end{align*}
and (\ref{laplacetozero}) yields the asymptotic stability.
\end{proof}

\subsection{Local and global strong solvability}\label{subsec_nlwp}

We first consider local-in-time wellposedness.
For $T>0$ we define relevant function spaces as
\begin{align*}
  \mathbb{E}_T &:= W^{1,p}((0,T),L^p_\sigma(\mathbb{R}^n)) \cap L^p((0,T),W^{4,p}(\mathbb{R}^n)), \\
  \mathbb{F}_T^1 &:= L^p((0,T),L^p_\sigma(\mathbb{R}^n)), \quad
  \mathbb{F}^2 := W^{4-4/p}_p(\mathbb{R}^n),\\
  \mathbb{F}_T&:=\mathbb{F}^1_T\times\mathbb{F}^2,
\end{align*}
and the linear operator
\[
	L:\mathbb{E}_T\to \mathbb{F}_T,\quad Lu:=(\partial_tu+A_{_{LF}}u,u(0)).
\]
If we also set
\begin{equation}\label{defnonlins}
  H(u) := \beta P|u|^2u+\lambda_0 P(u\cdot\nabla)u - PN(u)
\end{equation}
and
\begin{equation}\label{absfulleq}
	F(u):=Lu+(H(u),0),
\end{equation}
then the full system (\ref{eqn:min-hyd-mod-trans}) is rephrased as
\[
	F(u)=(f,u_0).
\]
\begin{lemma}
Let $p>(4+n)/4$. We have $H\in C^1(\mathbb{E}_T,\mathbb{F}_T)$ and its Fr\'echet derivative 
is represented as
\begin{equation}\label{repfrder}
	DH(v)u=P\sum_{|\alpha|\le 1}b_\alpha\partial^\alpha u
	+\lambda_0P(u\cdot\nabla)v,
	\quad u,v\in\mathbb{E},
\end{equation}
with matrices $b_\alpha=b_\alpha(v)\in
L^\infty((0,T)\times\mathbb{R}^n,\mathbb{R}^{n\times n})$.
\end{lemma}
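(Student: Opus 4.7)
The plan rests on three standard ingredients. By the trace property of the maximal-regularity space, $\mathbb{E}_T \hookrightarrow C([0,T], W^{4-4/p}_p(\mathbb{R}^n))$. Since the hypothesis $p > (n+4)/4$ is equivalent to $4 - 4/p > n/p$, the Sobolev embedding $W^{4-4/p}_p(\mathbb{R}^n) \hookrightarrow L^\infty(\mathbb{R}^n)$ holds; combined with the trace embedding it yields $\mathbb{E}_T \hookrightarrow L^\infty((0,T)\times\mathbb{R}^n)$. Finally, $P$ is bounded on $L^p(\mathbb{R}^n)$ and may therefore be absorbed in every $L^p$-estimate.

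First I would verify that $H$ maps $\mathbb{E}_T$ into $\mathbb{F}_T^1$ (and hence into $\mathbb{F}_T$ when paired with the zero initial datum in the second slot). The three summands admit the pointwise-in-space estimates
\begin{equation*}
\||u|^2 u\|_{L^p} \le \|u\|_\infty^2 \|u\|_{L^p}, \qquad \|(u\cdot\nabla)u\|_{L^p} \le \|u\|_\infty \|\nabla u\|_{L^p}, \qquad \|N(u)\|_{L^p} \le C \|u\|_\infty \|u\|_{L^p}.
\end{equation*}
Integrating in $t$ and exploiting $u \in L^\infty((0,T)\times\mathbb{R}^n) \cap L^p((0,T), W^{1,p}(\mathbb{R}^n))$ then gives $H(u) \in L^p((0,T), L^p_\sigma(\mathbb{R}^n))$.

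Next I would compute the Gateaux derivative summand by summand via Leibniz, which produces
\begin{equation*}
DH(v)u = P\bigl[\beta(|v|^2 u + 2(v\cdot u)v) + \lambda_0(v\cdot\nabla)u + \lambda_0(u\cdot\nabla)v - \textstyle\sum_{j,k} a_{jk}(v^j u^k + u^j v^k)\bigr].
\end{equation*}
Every summand except the isolated $\lambda_0 P((u\cdot\nabla)v)$ piece is a matrix-valued differential operator in $u$ of order $\le 1$ whose coefficients are polynomials of degree $\le 2$ in $v$; by the $L^\infty$ embedding these coefficients $b_\alpha(v)$ lie in $L^\infty((0,T)\times\mathbb{R}^n, \mathbb{R}^{n\times n})$. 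This yields the representation \eqref{repfrder}. To upgrade the Gateaux derivative to a Fréchet derivative I would expand $H(v+u) - H(v) - DH(v)u$: it is a finite sum of monomials of total degree two or three in $u$ with $v$-dependent $L^\infty$ coefficients, each bounded by $C(\|v\|_{\mathbb{E}_T})\|u\|_{\mathbb{E}_T}^2$ through the same product estimates, and hence $o(\|u\|_{\mathbb{E}_T})$.

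The only non-routine point, and the step I would expect to require the most care, is the continuity of $v \mapsto DH(v) \in \mathscr{L}(\mathbb{E}_T, \mathbb{F}_T^1)$. A telescoping identity expresses $b_\alpha(v_1) - b_\alpha(v_2)$ as a polynomial in $v_1, v_2$ multiplied by $v_1 - v_2$, so the $L^\infty$ embedding produces
\begin{equation*}
\|b_\alpha(v_1) - b_\alpha(v_2)\|_{L^\infty((0,T)\times\mathbb{R}^n)} \le C\bigl(\|v_1\|_{\mathbb{E}_T} + \|v_2\|_{\mathbb{E}_T}\bigr) \|v_1 - v_2\|_{\mathbb{E}_T};
\end{equation*}
the residual $\lambda_0 P((u\cdot\nabla)(v_1 - v_2))$ contribution is controlled directly by $\|u\|_\infty \|\nabla(v_1 - v_2)\|_{L^p(L^p)}$. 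Together these estimates yield Lipschitz continuity of $DH$ on bounded subsets of $\mathbb{E}_T$, finishing the plan.
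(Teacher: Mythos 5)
Your proposal is correct and follows essentially the same route as the paper: the embedding $\mathbb{E}_T\hookrightarrow L^\infty((0,T)\times\mathbb{R}^n)$ via the trace space and Sobolev embedding under $p>(4+n)/4$, the three product estimates showing $H:\mathbb{E}_T\to\mathbb{F}_T$, and the identical explicit formula for $DH(v)u$. The only difference is that you spell out the Fr\'echet differentiability and the continuity of $v\mapsto DH(v)$, which the paper dismisses as obvious from the multilinearity of $H$; your added detail is correct but not a different argument.
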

\begin{proof}
First observe that by \cite[Proposition~1.4.2]{amann95a} we have
\begin{equation}\label{etinlinf0}
	\mathbb{E}_T\,\hookrightarrow\, L^\infty((0,T),W^{4-4/p}_p(\mathbb{R}^n)).
\end{equation}
The assumption $p>(4+n)/4$ in combination with the Sobolev embedding
yields
\begin{equation}\label{etinlinf}
	\mathbb{E}_T\,\hookrightarrow\, L^\infty((0,T)\times\mathbb{R}^n).
\end{equation}
Utilizing this fact we obtain
\begin{align*}
	\|(u\cdot\nabla)u\|_{\mathbb{F}^1_T}
	&\le C\|u\|_{\infty}\|\nabla u\|_{\mathbb{F}^1_T}\le
	C\|u\|_{\mathbb{E}_T}^2,\\
	\||u|^2u\|_{\mathbb{F}^1_T}
	&\le C\||u|^2\|_{\infty}\|u\|_{\mathbb{F}^1_T}\le
	C\|u\|_{\mathbb{E}_T}^3,\\
	\|N(u)\|_{\mathbb{F}^1_T}
	&\le C\|u\|_{\infty}\|u\|_{\mathbb{F}^1_T}\le
	C\|u\|_{\mathbb{E}_T}^2,
\end{align*}
hence $H:\mathbb{E}_T\to \mathbb{F}_T$. By the fact that $H$ consists of bi- and trilinear
terms it is obvious that $H\in C^1(\mathbb{E}_T,\mathbb{F}_T)$ (even $H\in C^\infty(\mathbb{E}_T,\mathbb{F}_T)$). 
The Fr\'echet derivative reads as 
\begin{align*}
  DH(v)u &= \beta P|v|^2 u + 2\beta P (u\cdot v)v
             + \lambda_0 P (u\cdot\nabla)v \\
           & \quad + \lambda_0 P (v\cdot\nabla)u
                    - 2 P \sum_{i,k=1}^n a_{jk} (u^j v^k+u^k v^j).
\end{align*}
From this and (\ref{etinlinf}) representation (\ref{repfrder})
easily follows.
\end{proof}
\begin{remark}
Note that the lower bound $p>(4+n)/4$ is not optimal. But, since 
here we are mainly interested in an $L^2$-approach for dimension
$n=2,3$, it is sufficient for our purposes.
\end{remark}
\begin{lemma}\label{freiso}
Let $p>(4+n)/4$, $T\in(0,\infty)$, and $v\in \mathbb{E}_T$. Then we have
\[
	L+(DH(v),0)\in{{\mathscr L}_{is}}(\mathbb{E}_T,\mathbb{F}_T).
\]
\end{lemma}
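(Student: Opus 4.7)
The plan is to apply the \emph{method of continuity} along the homotopy
\[
  L_s := L + s\,(DH(v),0),\qquad s\in[0,1],
\]
connecting $L_0=L\in\mathscr L_{is}(\mathbb E_T,\mathbb F_T)$ (by Corollary~\ref{max_reg}) to the operator of interest $L_1=L+(DH(v),0)$. By the previous lemma each $L_s$ is a bounded linear map $\mathbb E_T\to\mathbb F_T$. It therefore suffices to produce an a priori bound
\[
  \|u\|_{\mathbb E_T}\le C\,\|L_s u\|_{\mathbb F_T},\qquad u\in\mathbb E_T,\ s\in[0,1],
\]
with $C$ depending only on $\|v\|_{\mathbb E_T}$ and $T$; the set of $s$ for which $L_s$ is an isomorphism is then open (Neumann series) and closed (uniform bound), hence equals $[0,1]$.

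For the estimate I would write $Lu = L_s u - s(DH(v)u,0)$, apply maximal regularity, and reduce everything to controlling $\|DH(v)u\|_{\mathbb F^1_T}$. Using the representation (\ref{repfrder}), the $L^\infty$-bounds of the coefficients $b_\alpha(v)$, and $\|\nabla v\|_{L^\infty((0,T),L^p)}\le C\|v\|_{\mathbb E_T}$ (which follows from (\ref{etinlinf0}) combined with $W^{4-4/p}_p\hookrightarrow W^{1,p}$), the task reduces to dominating $\|u(t)\|_{W^{1,p}}$ and $\|u(t)\|_\infty$ pointwise in $t$. Since the hypothesis $p>(4+n)/4$ places both $W^{1,p}$ and $L^\infty$ properly between $L^p$ and $W^{4,p}$ in the Sobolev scale, Gagliardo--Nirenberg and Young's inequality give, for every $\eta>0$,
\[
  \|w\|_{W^{1,p}}+\|w\|_\infty\le\eta\|w\|_{W^{4,p}}+C(\eta)\|w\|_{L^p},\qquad w\in W^{4,p}(\mathbb R^n).
\]
Integrating in time and combining with the previous bounds yields
\[
  \|DH(v)u\|_{\mathbb F^1_T}\le\eta K(v)\|u\|_{\mathbb E_T}+C(\eta,v)\|u\|_{L^p((0,T),L^p)}.
\]
Choosing $\eta$ so small that the first summand is absorbed into the maximal regularity estimate leaves
\[
  \|u\|_{\mathbb E_T}\le C(v)\bigl(\|L_s u\|_{\mathbb F_T}+\|u\|_{L^p((0,T),L^p)}\bigr).
\]

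The remaining lower-order term is killed by a standard time-splitting. Partitioning $[0,T]$ into $\lceil T/\tau\rceil$ pieces of length at most $\tau$ and rerunning the argument on each piece, taking the preceding endpoint value $u(k\tau)\in W^{4-4/p}_p$ (whose norm is controlled via the trace embedding (\ref{etinlinf0})) as new initial datum, one exploits
\[
  \|u\|_{L^p((k\tau,(k+1)\tau),L^p)}\le\tau^{1/p}\sup_{t}\|u(t)\|_{L^p}\le C\tau^{1/p}\|u\|_{\mathbb E_T}.
\]
For $\tau$ small enough (depending on $\|v\|_{\mathbb E_T}$) this term is absorbable on each subinterval, and summing over the finitely many pieces produces the sought uniform a priori bound. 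The method of continuity then transfers invertibility from $L_0$ to $L_1$. The principal technical point is the splitting of $DH(v)$: in particular, one must verify that the term $\lambda_0 P(u\cdot\nabla)v$, whose coefficient $\nabla v$ possesses only limited spatial regularity, still admits a decomposition with an arbitrarily small multiple of the top-order $\mathbb E_T$-norm. This works precisely because the trace embedding (\ref{etinlinf0}) forces $\nabla v$ into $L^\infty((0,T),L^p)$, whence it acts merely as a bounded multiplier and does not interfere with the interpolation applied to $u$.
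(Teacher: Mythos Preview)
Your argument is correct in outline and follows a route genuinely different from the paper's. Both proofs rest on the same core observation---that $DH(v)$ is a lower-order perturbation of the principal part, quantified via interpolation between $L^p$ and $W^{4,p}$---but they package it differently. The paper works entirely at the level of the spatial operator: it establishes the pointwise-in-time relative bound $\|DH(v(t))u\|_p\le C(v)\,\mu^{-1/4}\|(\mu+A_{LF})u\|_p$ for large $\mu$, invokes an abstract perturbation theorem for maximal regularity \cite[Theorem~2.5]{saal2004a} to conclude that $L+(\mu+DH(v),0)\in\mathscr L_{is}(\mathbb E_T,\mathbb F_T)$, and then removes the artificial shift $\mu$ via the exponential rescaling $u\mapsto e^{\mu t}u$, which is an isomorphism of $\mathbb E_T$ and of $\mathbb F_T$ since $T<\infty$. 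Your method-of-continuity-plus-time-splitting avoids the external reference but trades it for the bookkeeping of propagating estimates from one subinterval to the next; the paper's route is shorter once the abstract perturbation result is granted, while yours is more self-contained.

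One technical point in your time-splitting needs tightening. As written you bound $\|u\|_{L^p((k\tau,(k+1)\tau),L^p)}$ by $C\tau^{1/p}\|u\|_{\mathbb E_T}$ and then claim absorption ``on each subinterval''; but the left-hand side of the subinterval estimate is only $\|u\|_{\mathbb E_{[k\tau,(k+1)\tau]}}$, and summing $N\sim T/\tau$ copies of $\tau^{1/p}\|u\|_{\mathbb E_T}$ gives a contribution of order $T^{1/p}\|u\|_{\mathbb E_T}$, which does not become small as $\tau\to0$. The standard fix is to use instead $\sup_{t\in[k\tau,(k+1)\tau]}\|u(t)\|_{L^p}\le C\|u\|_{\mathbb E_{[k\tau,(k+1)\tau]}}$ with $C$ independent of $\tau$ (obtained e.g.\ by subtracting an extension of the initial value so that the trace embedding is applied to functions vanishing at the left endpoint), absorb \emph{locally}, and then iterate, controlling each $\|u(k\tau)\|_{W^{4-4/p}_p}$ through the previous subinterval. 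With this correction the argument goes through.
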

\begin{proof}
By employing representation (\ref{repfrder}) for $B(t):=DH(v(t))$ we will show
that $B(\cdot)$ is a lower order perturbation of $L$.
To this end, observe that $p>(4+n)/4$ yields
\[
	W^{3,p}(\mathbb{R}^n)\hookrightarrow L^\infty(\mathbb{R}^n)
\]
and thanks to (\ref{etinlinf0}) also $v\in
L^\infty((0,T),W^{1,p}(\mathbb{R}^n))$. This implies
\begin{align*}
	\|(u\cdot\nabla)v(t)\|_{L^p(\mathbb{R}^n)}
	&\le \|\nabla v(t)\|_{p}\|u\|_\infty\\
	&\le C\|v\|_{L^\infty((0,T),W^{1,p})}
	\frac{1}{\mu^{1/4}}\|(\mu+A_{_{LF}})u\|_p
\end{align*}
for all $u\in D(A_{_{LF}})$ and all $\mu>\mu_0$ with 
$\mu_0>0$ large enough. 
We estimate the second term in (\ref{repfrder}) as
\begin{align*}
	\|\sum_{|\alpha|\le 1}b_\alpha\partial^\alpha u\|_{L^p(\mathbb{R}^n)}
	&\le C\|v\|_{\infty}\|\nabla u\|_p\\
	&\le C\|v\|_{\infty}
	\frac{1}{\mu^{3/4}}\|(\mu+A_{_{LF}})u\|_p
\end{align*}
for all $u\in D(A_{_{LF}})$ and all $\mu>\mu_0$.
This shows that 
\[
	\|B(t)u\|_p\le \frac{C(v)}{\mu^{1/4}}\|(\mu+A_{_{LF}})u\|_p
	\quad (u\in D(A_{_{LF}}),\ \mu>\mu_0).
\]
Thus, choosing $\mu$ large enough and due to
Corollary~\ref{max_reg}, we can apply \cite[Theorem~2.5]{saal2004a}
to the result that
\[
	L+(\mu+DH(v),0)\in{{\mathscr L}_{is}}(\mathbb{E}_T,\mathbb{F}_T).
\]
Since $L+DH(v)$ is linear, we can remove the shift $\mu>0$ 
and the assertion follows.
\end{proof}
Appealing to the local inverse theorem we can now prove the
following result.
\begin{theorem}[local wellposedness]\label{locstrongsol}
Let $\Gamma_2,\beta>0$, $\Gamma_0,\alpha\in\mathbb{R}$,
and $p>(4+n)/4$.
For every $u_0\in W^{4-4/p}_p(\mathbb{R}^n)\cap L^p_\sigma(\mathbb{R}^n)$ and 
$f\in L^p((0,T),L^p_\sigma(\mathbb{R}^n))$ there exists a 
$T>0$ and a
unique solution $(u,q)$ of (\ref{eqn:min-hyd-mod-trans}) such that
\begin{align*}
	u&\in W^{1,p}((0,T),L^p_\sigma(\mathbb{R}^n))
	\cap L^{p}((0,T),W^{4,p}(\mathbb{R}^n)),\\
	\nabla q&\in L^{p}((0,T),L^{p}(\mathbb{R}^n)).
\end{align*}
\end{theorem}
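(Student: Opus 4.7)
The strategy is to apply the local inverse function theorem to the $C^1$-map $F:\mathbb{E}_T\to\mathbb{F}_T$ from (\ref{absfulleq}). Smoothness is immediate: $L$ is bounded linear, and $H\in C^1(\mathbb{E}_T,\mathbb{F}_T)$ by the lemma preceding Lemma~\ref{freiso}, so $F\in C^1(\mathbb{E}_T,\mathbb{F}_T)$.

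Given the data $(f,u_0)$, I first produce the linear solution $u^*\in\mathbb{E}_T$ to $Lu^*=(f,u_0)$ via Corollary~\ref{max_reg}. Then $F(u^*)=(f+H(u^*),u_0)$, and by Lemma~\ref{freiso} the Fr\'echet derivative $DF(u^*)=L+(DH(u^*),0)$ is a topological isomorphism $\mathbb{E}_T\to\mathbb{F}_T$. The inverse function theorem thus yields open neighborhoods $U\ni u^*$ in $\mathbb{E}_T$ and $V\ni F(u^*)$ in $\mathbb{F}_T$ such that $F:U\to V$ is a $C^1$-diffeomorphism. The prescribed datum $(f,u_0)$ belongs to $V$ exactly when $\|H(u^*)\|_{\mathbb{F}^1_T}<\rho$, where $\rho>0$ denotes the radius of $V$ around $F(u^*)$.

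To see this holds for $T$ small, recall the embedding $\mathbb{E}_T\hookrightarrow L^\infty((0,T)\times\mathbb{R}^n)$ from the previous lemma. Combined with H\"older's inequality in time, the bounds on $|u|^2u$, $(u\cdot\nabla)u$, and $N(u)$ collected there give
\[
\|H(u^*)\|_{\mathbb{F}^1_T}\le C\,T^{1/p}\bigl(\|u^*\|_{\mathbb{E}_T}^2+\|u^*\|_{\mathbb{E}_T}^3\bigr),
\]
while Corollary~\ref{max_reg} supplies a uniform bound $\|u^*\|_{\mathbb{E}_T}\le C(\|f\|_{\mathbb{F}^1_{T_0}}+\|u_0\|_{\mathbb{F}^2})$ for $T\le T_0$. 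A quantitative version of the inverse function theorem bounds $\rho$ from below in terms of $\|DF(u^*)^{-1}\|_{\mathscr{L}(\mathbb{F}_T,\mathbb{E}_T)}$ and the local Lipschitz constant of $DF$ near $u^*$, both controllable by $\|u^*\|_{\mathbb{E}_T}$. Choosing $T>0$ small enough therefore places $(f,u_0)$ inside $V$, and the preimage $u=F^{-1}(f,u_0)\in U$ is the desired solution; the pressure gradient $\nabla q$ is then recovered as the Helmholtz complement of the remaining terms in the un-projected momentum equation.

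The main delicacy is ensuring that $\rho$, and the constants in Corollary~\ref{max_reg} and Lemma~\ref{freiso} feeding into it, remain uniform as $T\to 0$, so that the $T^{1/p}$ smallness can actually be cashed in without the target neighborhood $V$ collapsing at the same rate. This is the standard behavior of maximal regularity on intervals of bounded length but warrants a careful check; uniqueness of $u$ beyond the neighborhood $U$, if desired, follows from a Gronwall-type argument applied to the difference of two candidate solutions using the Lipschitz property of $H$ on bounded sets.
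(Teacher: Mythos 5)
Your argument coincides with the paper's up to and including the application of the inverse function theorem at the reference solution $u^*=L^{-1}(f,u_0)$: the identification of $DF(u^*)=L+(DH(u^*),0)$ as an isomorphism via Lemma~\ref{freiso} and the resulting local diffeomorphism $F:U\to V$ are exactly the paper's steps. Where you diverge is in how you force the prescribed datum $(f,u_0)$ into $V$. You shrink $T$ and invoke a quantitative inverse function theorem, which requires that the radius $\rho$ of $V$ \emph{and} every constant feeding into it --- the maximal regularity constant of Corollary~\ref{max_reg}, the norm of $DF(u^*)^{-1}$ from Lemma~\ref{freiso}, the local Lipschitz constant of $DH$, and above all the embedding constant of $\mathbb{E}_T\hookrightarrow L^\infty((0,T),W^{4-4/p}_p(\mathbb{R}^n))\hookrightarrow L^\infty((0,T)\times\mathbb{R}^n)$ --- remain uniformly controlled as $T\to 0$. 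This last point is genuinely delicate and not merely a formality: the embedding $\mathbb{E}_T\hookrightarrow L^\infty((0,T),W^{4-4/p}_p)$ has a $T$-uniform constant only on the subspace of functions with vanishing initial trace (or with $\|u(0)\|_{W^{4-4/p}_p}$ added to the right-hand side), so controlling $\|DF(u)-DF(v)\|$ uniformly for $u,v$ in a ball of $\mathbb{E}_T$ around $u^*$ as $T\to 0$ needs an extra argument that you flag (``warrants a careful check'') but do not supply. As written, the claim that $\rho$ does not collapse is asserted rather than proved, and that is the one step on which the whole construction hinges.

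The paper sidesteps this entirely by never changing the time interval: it keeps $T$ fixed and instead perturbs the forcing, setting $f_{T'}=f$ on $(0,T')$ and $f_{T'}=f+H(u^*)$ on $[T',T)$. Absolute continuity of the integral gives $f_{T'}\to f+H(u^*)=F(u^*)_1$ in the \emph{fixed} space $\mathbb{F}^1_T$, so $(f_{T'},u_0)\in V$ for $T'$ small, and the corresponding $u=F^{-1}(f_{T'},u_0)$ solves the original problem on $(0,T')$. No uniformity in $T$ of any constant is needed. I would recommend either adopting this device or honestly carrying out the uniformity analysis (working on the affine subspace $\{u\in\mathbb{E}_T:\ u(0)=u_0\}$ is the standard way to recover $T$-independent embedding constants). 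Your closing remark on uniqueness via Gronwall is fine but also unnecessary once you note, as the paper implicitly does, that uniqueness already follows from the bijectivity of $F$ on $U$ together with a standard continuation/overlap argument.
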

\begin{proof}
We fix $(f,u_0)\in \mathbb{F}_T$ and define a reference solution as
\[
	u^*:=L^{-1}(f,u_0)\in \mathbb{E}_T.
\]
For the Fr\'echet derivative of the nonlinear operator $F\in
C^1(\mathbb{E}_T,\mathbb{F}_T)$ 
given in (\ref{absfulleq}) we obtain in view of Lemma~\ref{freiso}
that
\[
	DF(u^*)=L+DH(u^*)\in {{\mathscr L}_{is}}(\mathbb{E}_T,\mathbb{F}_T).
\]
Hence the local inverse theorem 
yields neighborhoods $U\subset\mathbb{E}_T$ of $u^*$
and $V\subset\mathbb{F}_T$ of $F(u^*)$ such that 
$F:U\to V$ is bijective. 

Now, taking $T'>0$ small enough, we find a solution.
To see this, for $0<T^\prime<T$ we define
$f_{T^\prime}\in\mathbb{F}_T^1$ by
\begin{equation*}
  f_{T^\prime}(t) := \left\{
                       \begin{array}{ll}
                         f(t),         & t\in (0,T^\prime) \\
                         f(t)+H(u^*)(t), & t\in [T^\prime,T).
                       \end{array}
                     \right.
\end{equation*}
The continuity of the integral implies
\begin{equation*}
  f_{T^\prime} \xrightarrow{T^\prime\to 0} f+H(u^*) \text{ in } \mathbb{F}_T^1.
\end{equation*}
Thus, since $F(u^*)=(f+H(u^*),u_0)$, there is $T^\prime>0$ with
$(f_{T^\prime},u_0)\in V$. The unique function $u\in U$ with
$F(u)=(f_{T^\prime},u_0)$ satisfies $F(u)=(f,u_0)$ on $(0,T^\prime)$.
Thus, recovering the pressure via
\[
	\nabla q:=-(I-P)\left[
	\lambda_0\left[u\cdot\nabla\right] u 
		+(M+\beta|u|^2)u-N(u)\right]
		\in L^p((0,T),L^p(\mathbb{R}^n))
\]
we find that $(u,q)|_{(0,T')}$ is a strong solution to
(\ref{eqn:min-hyd-mod-trans}). 
\end{proof}
The just constructed local solution extends to a global one, at least 
if $p=2$.
\begin{theorem}[global wellposedness]\label{globstrongsol}
Let $\Gamma_2,\beta>0$, $\Gamma_0,\alpha\in\mathbb{R}$, $T\in(0,\infty)$. 
For every $u_0\in H^{2}(\mathbb{R}^n)\cap L^2_\sigma(\mathbb{R}^n)$ and 
$f\in L^2((0,T),L^2_\sigma(\mathbb{R}^n))$ there exists a 
unique solution $(u,q)$ of (\ref{eqn:min-hyd-mod-trans}) such that
\begin{align*}
	u&\in H^{1}((0,T),L^2_\sigma(\mathbb{R}^n))
	\cap L^{2}((0,T),H^{4}(\mathbb{R}^n)),\\
	\nabla q&\in L^{2}((0,T),L^2(\mathbb{R}^n)).
\end{align*}
\end{theorem}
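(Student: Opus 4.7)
The plan is to build on the local strong solution furnished by Theorem~\ref{locstrongsol} --- which applies here because $p=2>(4+n)/4$ for $n\in\{2,3\}$ and $W^{4-4/p}_p=H^2$ matches the assumed class of initial data --- and to extend it globally by means of a uniform a priori $H^2$-bound combined with the continuation criterion built into the local theorem (the existence time depending monotonically on $\|u_0\|_{H^2}$). A standard $L^2$-energy estimate, obtained by testing (\ref{eqn:min-hyd-mod-trans}) against $u$ in $L^2(\mathbb{R}^n)$, serves as the first level: since $u$ is solenoidal and $V$ is constant, the transport and pressure contributions vanish; the principal part yields $\Gamma_2\|\Delta u\|_2^2+\Gamma_0\|\nabla u\|_2^2$ (with the $\Gamma_0$-term absorbed into $\epsilon\Gamma_2\|\Delta u\|_2^2+C_\epsilon\|u\|_2^2$ via the interpolation $\|\nabla u\|_2^2\le\|u\|_2\|\Delta u\|_2$ whenever $\Gamma_0<0$); the quartic Landau term contributes the good dissipation $\beta\|u\|_4^4$; and the remaining contributions $(Mu,u)$, $(N(u),u)$, $(f,u)$ are handled by Cauchy--Schwarz and Young's inequality. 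Gronwall then gives
\begin{equation*}
\|u\|_{L^\infty((0,T),L^2)}+\|u\|_{L^2((0,T),H^2)}\le C\bigl(T,\|u_0\|_2,\|f\|_{L^2((0,T),L^2)}\bigr).
\end{equation*}

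Next I would carry out an $H^2$-estimate by testing against the solenoidal function $\Delta^2 u$. This produces $\frac{d}{dt}\|\Delta u\|_2^2+2\Gamma_2\|\Delta^2 u\|_2^2$ on the left, together with linear remainders in $\|\Delta u\|_2^2$ arising from $(Mu,\Delta^2 u)=(M\Delta u,\Delta u)$ and, after Young absorption of $|\Gamma_0|\,\|\nabla\Delta u\|_2^2\le\epsilon\|\Delta^2 u\|_2^2+C_\epsilon\|\Delta u\|_2^2$, from the $\Gamma_0$-contribution; all such linear terms are harmless for Gronwall. The nonlinearities $(\beta|u|^2u,\Delta^2u)$, $(\lambda_0(u\cdot\nabla)u,\Delta^2u)$, and $(N(u),\Delta^2u)$ are estimated using the embedding $H^2(\mathbb{R}^n)\hookrightarrow L^\infty(\mathbb{R}^n)$, valid for $n\le 3$, together with the Gagliardo--Nirenberg inequalities
\begin{equation*}
\|u\|_\infty\le C\|u\|_2^{1-n/8}\|\Delta^2 u\|_2^{n/8},\qquad \|\nabla u\|_2\le C\|u\|_2^{3/4}\|\Delta^2 u\|_2^{1/4}.
\end{equation*}
In every case the resulting power of $\|\Delta^2 u\|_2$ on the right is strictly less than $2$ --- the worst case being $7/4$, from $\||u|^2u\|_2\le\|u\|_2\|u\|_\infty^2\le C\|u\|_2^{9/4}\|\Delta^2 u\|_2^{3/4}$ in dimension three --- so Young's inequality absorbs the dissipative factor into $\epsilon\|\Delta^2u\|_2^2$ and leaves on the right only a polynomial in the already uniformly bounded $\|u\|_2$. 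A second Gronwall application then closes the $H^2$-estimate uniformly on $[0,T]$.

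The main obstacle, and the precise point where the Swift--Hohenberg hypothesis $\Gamma_2>0$ is indispensable, is the cubic term $\beta|u|^2u$: the classical second-order dissipation $-\Gamma_0\Delta u$ alone would not suffice to absorb it in three dimensions, whereas the bilaplacian makes the Young absorption above work. Once the uniform $H^2$-bound is available, the continuation criterion of Theorem~\ref{locstrongsol} rules out finite-time blow-up and the local solution extends to all of $(0,T)$. Finally, viewing the nonlinearities as an $L^2((0,T),L^2_\sigma)$-forcing --- which they indeed are, thanks to the $L^\infty$-in-space bound on $u$ just derived --- the maximal regularity estimate of Corollary~\ref{max_reg} delivers $u\in H^1((0,T),L^2_\sigma)\cap L^2((0,T),H^4)$, and the pressure is recovered via the Helmholtz projection exactly as at the end of the proof of Theorem~\ref{locstrongsol}; uniqueness on $(0,T)$ is inherited from the local uniqueness statement.
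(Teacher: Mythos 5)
Your proposal is correct, and its overall architecture (a priori energy estimates plus continuation of the local solution from Theorem~\ref{locstrongsol}, then maximal regularity and Helmholtz projection to recover the full solution class) is the same as the paper's. The genuine difference is in the middle tier: the paper runs a three-step cascade, testing successively with $u$, with $-\Delta u$, and with $\Delta^2 u$. The intermediate $H^1$-step is where the paper exploits the pointwise sign of $(\nabla|u|^2u)\nabla u\ge 0$ and then controls the cubic and convective terms at the top level via $\|u\|_{L^\infty((0,T),H^1)}$ and the space-time interpolation $L^\infty H^1\cap L^2H^3\hookrightarrow L^6((0,T),L^6)$. You skip that step entirely and absorb the nonlinearities in the $\Delta^2 u$-estimate by spatial Gagliardo--Nirenberg interpolation against the dissipation $\|\Delta^2u\|_2$; your exponents check out (for $n=3$ the cubic term gives $\|u\|_2^{9/4}\|\Delta^2u\|_2^{7/4}$ and the convective term $\|u\|_2^{11/8}\|\Delta^2u\|_2^{13/8}$, both subcritical for Young), so only the $L^\infty L^2$-bound from the first energy identity is needed to close the Gronwall argument. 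This is a modest but real simplification: one fewer estimate, at the price of worse (but irrelevant) powers of the data in the final bound. The one soft spot is your appeal to a ``continuation criterion built into'' Theorem~\ref{locstrongsol}: as stated and proved (via the inverse function theorem around the reference solution $u^*=L^{-1}(f,u_0)$), that theorem does not explicitly assert that the existence time is bounded below on bounded sets of $\|u_0\|_{H^2}$, so you should add the standard remark that the size of the neighborhoods in the inverse function theorem, and hence $T'$, can be controlled in terms of $\|u^*\|_{\mathbb{E}_T}\le C(\|u_0\|_{H^2}+\|f\|_{\mathbb{F}^1_T})$; the paper leaves this equally implicit.
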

\begin{proof}
We derive a priori bounds in the strong class which will give the
result. To this end, we multiply (\ref{eqn:min-hyd-mod-trans})
with $u$ and integrate over $(0,t)\times\mathbb{R}^n$. This yields
\begin{align*}
	&\frac12\|u(t)\|_2^2+\Gamma_2\int_0^t\|\Delta u\|_2^2\,ds
	+\beta\int_0^t\|u\|_4^4\,ds
	= \frac12\|u_0\|_2^2
	+\int_0^t\int_{\mathbb{R}^n}fu\,dxds\\
	&\qquad\strut +\Gamma_0\int_0^t\int_{\mathbb{R}^n}u\Delta u\,dxds
	-\int_0^t\int_{\mathbb{R}^n}uMu\,dxds
	+\int_0^t\int_{\mathbb{R}^n}uN(u)\,dxds
\end{align*}
for $t\in(0,T)$. By applying Cauchy-Schwarz' and Young's inequality
we can estimate as
\[
	\int_0^t\int_{\mathbb{R}^n}u\Delta u\,dxds
	\le \frac{\Gamma_2}{2|\Gamma_0|}\|\Delta u\|_{L^2((0,t),L^2)}^2
	+C\int_0^t\|u\|_2^2\,dt
\]
and 
\begin{equation}\label{estnonglob}
	\int_0^t\int_{\mathbb{R}^n}uN(u)\,dxds
	\le \frac{\beta}{2}\|u\|_{L^4((0,t),L^4)}^4
	+C\int_0^t\|u\|_2^2\,dt.
\end{equation}
Plugging this into the above equality we arrive at
\begin{align*}
	&\|u(t)\|_2^2+\|u\|_{L^2((0,t),H^2)}^2
	+\|u\|_{L^4((0,t),L^4)}^4\\
	&\quad \le C\left(\|u_0\|_2^2
	+\|f\|_{L^2((0,t),L^2)}^2\right)
	+C\int_0^t\|u(s)\|_2^2\,ds\quad (t\in(0,T)).
\end{align*}
Hence, Gronwall's lemma yields
\begin{equation}\label{weakuniest}
\begin{split}
	&\|u\|_{L^\infty((0,T),L^2)}^2+\|u\|_{L^2((0,T),H^2)}^2
	+\|u\|_{L^4((0,T),L^4)}^4\\
	&\quad\le C(1+Te^{\omega T})\left(\|u_0\|_2^2
	+\|f\|_{L^2((0,T),L^2)}^2\right)
\end{split}
\end{equation}
for some $C,\omega>0$. 
	
Next we multiply (\ref{eqn:min-hyd-mod-trans}) with $-\Delta u$
and obtain by utilizing integration by parts
\begin{equation}\label{secstepglob}
\begin{split}
	&\frac12\|\nabla u(t)\|_2^2+\Gamma_2\int_0^t\|\nabla\Delta
	u\|_2^2\,ds
	+\beta\int_0^t\int_{\mathbb{R}^n}(\nabla|u|^2u)\nabla u\,dxds\\
	&= \frac12\|\nabla u_0\|_2^2
	-\int_0^t\int_{\mathbb{R}^n}f\Delta u\,dxds
	-\Gamma_0\|\Delta u\|_{L^2((0,t),L^2)}^2\\
	&\qquad\strut -\int_0^t\int_{\mathbb{R}^n}(\nabla u)M\nabla u\,dxds
	-\int_0^t\int_{\mathbb{R}^n}(\Delta u)N(u)\,dxds\\
	&\qquad\strut -\lambda_0\int_0^t\int_{\mathbb{R}^n}[(V+u)\cdot\nabla]
	u\Delta u\,dxds.
\end{split}
\end{equation}
Concerning the third term on the left hand side we calculate
\begin{align*}
	(\nabla|u|^2u)\nabla u
	&=\sum_{j,k,\ell=1}^n(\partial_ku^j)
	\partial_k(u^\ell)^2u^j\\
	&=\sum_{j,k,\ell=1}^n
	(u^\ell)^2(\partial_ku^j)^2
	+2\sum_{k=1}^n(u\cdot\partial_ku)^2.
\end{align*}
This shows that this term is non-negative, hence it drops out.
For the fifth term on the right hand side analogously to (\ref{estnonglob})
we obtain 
\[
	\int_0^t\int_{\mathbb{R}^n}(\Delta u)N(u)\,dxds
	\le C\left(\|u\|_{L^4((0,t),L^4)}^4
	+\int_0^t\|\Delta u\|_2^2\,ds\right),
\]
whereas the last term on the right hand side can be estimated 
utilizing integration by parts and ${\mathrm{div\,}}(V+u)=0$ as 
\begin{align*}
	&\int_0^t\int_{\mathbb{R}^n}[(V+u)\cdot\nabla]u\Delta u\,dxds\\
	&\le C\left(\|u\|_{L^2((0,t),L^2)}^2+\|u\|_{L^4((0,t),L^4)}^4
	\right)
	+\frac{\Gamma_2}{2|\lambda_0|}\int_0^t\|\nabla \Delta u\|_2^2\,dt.
\end{align*}
Plugging this into (\ref{secstepglob}) yields in combination with
(\ref{weakuniest}) that
\begin{equation}\label{weakuniest2}
\begin{split}
	&\|u\|_{L^\infty((0,T),H^1)}^2+\|u\|_{L^2((0,T),H^3)}^2
	+\|u\|_{L^4((0,T),L^4)}^4\\
	&\quad\le C(1+Te^{\omega T})\left(\|u_0\|_{H^1}^2
	+\|f\|_{L^2((0,T),L^2)}^2\right).
\end{split}
\end{equation}

In the third step we multiply with $\Delta^2 u$ to obtain
\begin{equation}\label{lap2est}
\begin{split}
	&\frac12\|\Delta u(t)\|_2^2+\Gamma_2\int_0^t\|\Delta^2
	u\|_2^2\,ds\\
	&= \frac12\|\Delta u_0\|_2^2
	+\int_0^t\int_{\mathbb{R}^n}f\Delta^2 u\,dxds
	-\Gamma_0\|\Delta\nabla u\|_{L^2((0,t),L^2)}^2\\
	&\qquad\strut -\int_0^t\int_{\mathbb{R}^n}(\Delta u)M\Delta u\,dxds
	+\int_0^t\int_{\mathbb{R}^n}(\Delta^2 u)N(u)\,dxds\\
	&\qquad\strut -\lambda_0\int_0^t\int_{\mathbb{R}^n}[(V+u)\cdot\nabla]
	u\Delta^2 u\,dxds
	-\beta\int_0^t\int_{\mathbb{R}^n}|u|^2u\Delta^2 u\,dxds.
\end{split}
\end{equation}
It is enough to focus on the last two terms on the right hand side.
The remaining terms can be handled very similar as before.
The first one of those two terms can be controlled 
by utilizing the estimate
\begin{align*}
	\int_0^t\|(u\cdot\nabla)u\|_2^2\,ds
	&\le C\int_0^t\|u\|_4^2\|\nabla u\|_4^2\,ds\\
	&\le C\|u\|_{L^\infty(H^1)}^2\int_0^t\|u\|_{H^2}^2\,ds\\
	&\le C(1+Te^{\omega T})^2\left(\|u_0\|_{H^1}^2
	+\|f\|_{L^2((0,T),L^2)}^2\right)^2,
\end{align*}
which is valid thanks to (\ref{weakuniest2}) 
and the Sobolev embedding $H^1(\mathbb{R}^n)\hookrightarrow L^4(\mathbb{R}^n)$. 

For the last term, using complex interpolation \cite{triebel} we obtain
\[
	L^{\infty}((0,T),H^1(\mathbb R^n))\cap
	L^2((0,T),H^3(\mathbb R^n))
	\hookrightarrow L^{2/s}((0,T),H^{2s+1}(\mathbb{R}^n))
\]
for $s\in[0,1]$. Taking into account $n\le 3$ the Sobolev embedding
yields
\begin{equation}\label{wssinl4}
	H^{5/3}\hookrightarrow L^6(\mathbb{R}^n).
\end{equation}
Hence, by setting $s=1/3$ we obtain
\[
	L^{\infty}((0,T),H^1(\mathbb R^n))\cap
	L^2((0,T),H^3(\mathbb R^n))
	\hookrightarrow L^{6}((0,T),L^6(\mathbb{R}^n)).
\]
On the other hand, from Cauchy-Schwarz' and Young's inequality we see that
\[
	|\int_0^t\int_{\mathbb{R}^n}|u|^2u\Delta^2 u\,dxds|
	\le C\|u\|_{L^6((0,t),L^6)}^6+\frac{\Gamma_2}{2\beta}
	\|\Delta^2 u\|_{L^2((0,t),L^2)}^2.
\]
Here the second term on the right hand side is absorbed 
by the left hand side of (\ref{lap2est}) 
and the first term is again controlled by estimate (\ref{weakuniest2}).
Summarizing, we arrive at
\begin{align*}
	&\|u\|_{L^\infty((0,T),H^2)}^2+\|u\|_{L^2((0,T),H^4)}^2\\
	& \quad\le C(1+Te^{\omega T})^3\left(\|u_0\|_{H^1}^2
	+\|f\|_{L^2((0,T),L^2)}^2\right)^3.
\end{align*}
Using equations (\ref{eqn:min-hyd-mod-trans}) it is straight forward
to derive similar bounds for the quantities $\|u\|_{H^1((0,T),L^2)}$
and $\|\nabla q\|_{L^2((0,T),L^2)}$ as well.
Thus the assertion is proved.
\end{proof}

\subsection{Nonlinear (In-) Stability}\label{subsec_stab}

Most of the outcome on linear (in-) stability in the $L^2$-setting 
transfers to the corresponding
nonlinear situation. For the transfer of the stability results
we apply energy methods 
and for the transfer of the results on instability 
we employ Henry's instability theorem \cite[Corollary 5.1.6]{henry1981}, 
which we reformulate suitably for our purposes.
\begin{proposition}
\label{thm_henry}
Let $-A$ be the generator of a holomorphic $C_0$-semigroup in a Banach
space $X$ and let $f\colon U\to X$, where $U$ is an open neighborhood in
$X^\gamma:=D(A^\gamma)$ for some $\gamma\in(0,1)$, be locally Lipschitz. Let $x_0\in
D(A)\cap U$ be an equilibrium point of 
\begin{equation}
\label{eq_henry}
\dot w(t)+Aw(t)=f(w(t)),
\end{equation}
i.e. $Ax_0=f(x_0)$. Suppose
\begin{align*}
f(x_0+z)&=f(x_0)+Bz+g(z),\quad g(0)=0,\\
\|g(z)\|&=O(\|z\|_{X^\gamma}^s),
\quad\text{as }z\to0\text{ in }X^\gamma,
\end{align*}
for some $s>1$, $B\in\mathscr{L}(X^\gamma,X)$, and
	$\sigma(-A+B)\cap\{z\in\mathbb{C}:\ \mathrm{Re\,} z>0\}\neq\emptyset$. 
Then $x_0$ is nonlinearly unstable in the following sense: 
there is a constant $\varepsilon_0>0$ such that for any $\delta>0$ there
exists $x\in X^\gamma$ with $\|x-x_0\|_\gamma<\delta$ such that there is
some finite time $t_0>0$ with
\begin{align*}
\|w(t_0,x)\|_{X^\gamma}\geq\varepsilon_0,
\end{align*}
where $w(\cdot,x)$ denotes the solution of (\ref{eq_henry}) with
initial value $w(0,x)=x$.
\end{proposition}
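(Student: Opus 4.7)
The plan is to reduce the proposition to Henry's original instability theorem~\cite[Corollary~5.1.6]{henry1981} by translating the equilibrium to the origin and absorbing the linearization into the generator.

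First I would set $z(t):=w(t)-x_0$. Using $Ax_0=f(x_0)$ together with the expansion $f(x_0+z)=f(x_0)+Bz+g(z)$ turns the equation $\dot w+Aw=f(w)$ into
\begin{equation*}
  \dot z(t)+\tilde A\,z(t)=g(z(t)),\qquad \tilde A:=A-B,
\end{equation*}
on a neighborhood of the new equilibrium $z=0$. Instability of $x_0$ in $X^\gamma$ is then equivalent to instability of $z=0$ for this shifted problem, so the remaining task is to put the reduced equation into the framework of the cited corollary.

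Second, I would check that $-\tilde A$ still generates a holomorphic $C_0$-semigroup on $X$ with $D(\tilde A^\gamma)=X^\gamma$ and equivalent norm. The essential point is that $B\in\mathscr{L}(X^\gamma,X)$ is $A$-bounded with relative bound zero: combining $\|Bu\|_X\le\|B\|_{\mathscr{L}(X^\gamma,X)}\|A^\gamma u\|_X$ with the moment inequality for the sectorial operator $A$ gives, after a harmless shift $A\mapsto A+\omega$ that moves the spectrum off zero, an estimate $\|Bu\|_X\le\varepsilon\|Au\|_X+C_\varepsilon\|u\|_X$ for $u\in D(A)$ with arbitrarily small $\varepsilon>0$. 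Classical perturbation theory for sectorial operators then preserves sectoriality and the domain, and a standard interpolation argument identifies the fractional power scales of $A$ and $\tilde A$.

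Finally, the transformed equation $\dot z+\tilde A z=g(z)$ fits directly into~\cite[Corollary~5.1.6]{henry1981}: $g$ inherits local Lipschitz continuity from $f$, satisfies $g(0)=0$ and $\|g(z)\|_X=O(\|z\|_{X^\gamma}^s)$ with $s>1$, and by hypothesis $\sigma(-\tilde A)\cap\{\mathrm{Re\,}z>0\}=\sigma(-A+B)\cap\{\mathrm{Re\,}z>0\}\neq\emptyset$. Applying Henry's corollary yields $\varepsilon_0>0$ together with small initial data $z_0\in X^\gamma$, $\|z_0\|_\gamma<\delta$, and a finite escape time $t_0>0$ with $\|z(t_0,z_0)\|_{X^\gamma}\ge\varepsilon_0$; setting $x:=x_0+z_0$ returns the statement. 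The only genuine difficulty I expect is the bookkeeping in step two — making sure that domain, fractional power spaces and spectral data all survive the shift $A\mapsto A-B$; no individual estimate is delicate, and the proposition really amounts to a convenient repackaging of Henry's theorem for equations with a non-zero equilibrium.
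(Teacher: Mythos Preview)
Your reduction is correct and is exactly the intended content of the proposition: translate the equilibrium to the origin, absorb the bounded linear part $B$ into the generator, and invoke Henry's original corollary. The paper, however, does not give a proof at all --- it introduces the proposition as a direct reformulation of \cite[Corollary~5.1.6]{henry1981} and cites it without further argument. So your sketch is not so much an alternative route as a spelling-out of what the paper leaves implicit.

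One small comment on your step two: you are right that the relative-bound-zero perturbation argument is the point to watch, but note that for the applications in the paper (Theorems~\ref{nonlinstapdis}(3) and~\ref{nonlinstabord}) one has $B=0$, so the issue never arises there; the proposition is stated with general $B\in\mathscr{L}(X^\gamma,X)$ only because that is how Henry formulates it.
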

For instability also the following lemma will be helpful. 
\begin{lemma}\label{nonlinestinl2}
Let the nonlinearity $H$ be given as in (\ref{defnonlins}). Then we have
$H\in C^1(H^\sigma(\mathbb{R}^n),L^2_\sigma(\mathbb{R}^n))$ for every $\sigma\ge 5/4$
and 
\[
	\|H(u)\|_2\le C\|u\|_{H^\sigma}^2\quad (\|u\|_{H^\sigma}\le 1).
\]
\end{lemma}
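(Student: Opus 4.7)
The plan is to reduce every contribution to $H$ to a product estimate via Hölder and Sobolev embedding. Since $P\in\mathscr L(L^2)$, the required bound on $\|H(u)\|_{L^2}$ reduces to bounds on the three building blocks $|u|^2u$, $(u\cdot\nabla)u$, and $N(u)=\sum a_{jk}u^ju^k$ in $L^2(\mathbb R^n)$ with $n\in\{2,3\}$. The $C^1$-regularity will then come for free from the polynomial structure of $H$ once the corresponding multilinear forms are shown to be bounded on $H^\sigma$.

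First I would record the Sobolev embeddings at the endpoint $\sigma=5/4$. For $n=3$ the relevant critical embeddings, all special cases of $H^s(\mathbb R^3)\hookrightarrow L^q(\mathbb R^3)$ with $1/q=1/2-s/3$, are
\begin{equation*}
  H^{5/4}\hookrightarrow L^{12},\qquad H^{1/4}\hookrightarrow L^{12/5},\qquad H^1\hookrightarrow L^6,\qquad H^{3/4}\hookrightarrow L^4.
\end{equation*}
For $n=2$ one has $H^{5/4}(\mathbb R^2)\hookrightarrow L^\infty(\mathbb R^2)$, which reduces every estimate below to a triviality; hence the critical case is $n=3$ with $\sigma=5/4$, to which I henceforth restrict.

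Next I would estimate each block. For the convection term, splitting $\tfrac1{12}+\tfrac5{12}=\tfrac12$ and applying Hölder gives
\begin{equation*}
  \|(u\cdot\nabla)u\|_{L^2}\le \|u\|_{L^{12}}\|\nabla u\|_{L^{12/5}}\le C\|u\|_{H^{5/4}}^2,
\end{equation*}
where the last step uses the two critical embeddings above applied to $u$ and to $\nabla u\in H^{1/4}$. For the cubic contribution, $\||u|^2u\|_{L^2}=\|u\|_{L^6}^3\le C\|u\|_{H^1}^3\le C\|u\|_{H^\sigma}^3$. For $N(u)$, $\|N(u)\|_{L^2}\le C\|u\|_{L^4}^2\le C\|u\|_{H^{3/4}}^2\le C\|u\|_{H^\sigma}^2$. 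Summing and invoking $\|u\|_{H^\sigma}\le1$ to absorb the cubic term into the quadratic one yields $\|H(u)\|_2\le C\|u\|_{H^\sigma}^2$ as claimed.

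For the $C^1$-statement I would polarize: the building blocks of $H$ arise from the bilinear maps $(u,v)\mapsto P(u\cdot\nabla)v$ and $(u,v)\mapsto \sum a_{jk}P(u^jv^k)$, and the trilinear map $(u,v,w)\mapsto \beta P\bigl((u\cdot v)w\bigr)$. Exactly the same Hölder–Sobolev computations as above show that these multilinear forms are bounded from $(H^\sigma)^k$ into $L^2_\sigma(\mathbb R^n)$, $k=2,3$. Boundedness of continuous multilinear forms implies smoothness of the induced polynomial map, so $H\in C^\infty(H^\sigma,L^2_\sigma)\subset C^1(H^\sigma,L^2_\sigma)$, finishing the proof.

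The main obstacle is the endpoint nature of the convection estimate in $n=3$: both Sobolev embeddings $H^{5/4}\hookrightarrow L^{12}$ and $H^{1/4}\hookrightarrow L^{12/5}$ are critical, and $\sigma=5/4$ is precisely the least regularity for which the Hölder pair $(12,12/5)$ can simultaneously be controlled by $\|u\|_{H^\sigma}$. Any attempt to lower $\sigma$ would force a reshuffling of exponents in which one of the required Sobolev embeddings breaks down.
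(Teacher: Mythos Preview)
Your proof is correct and follows essentially the same approach as the paper: the same H\"older splitting $(12,12/5)$ for the convection term, the same critical Sobolev embeddings at $\sigma=5/4$, and the same $\|u\|_{L^6}^3\le C\|u\|_{H^1}^3$ estimate for the cubic term. Your version is somewhat more explicit than the paper's (separate treatment of $n=2$, explicit handling of $N(u)$, and a spelled-out polarization argument for the $C^1$-claim), but the ideas are identical.
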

\begin{proof}
Employing H\"older's inequality we obtain
\[
	\|(u\cdot\nabla)u\|_2\le \|u\|_p\|\nabla u\|_q
\]
for $1/p+1/q=1/2$. It is easily checked that the Sobolev embeddings 
$H^\gamma(\mathbb{R}^n)\hookrightarrow L^p(\mathbb{R}^n)$ and $H^{\gamma-1}(\mathbb{R}^n)\hookrightarrow L^q(\mathbb{R}^n)$ 
are sharp for $p=12$, $q=12/5$, and $\gamma=5/4$.
By the fact that
\[
	\||u|^2u\|_2=\|u\|_6^3\le C\|u\|_{H^1}^3
\]
and since $H$ consists of bi- and trilinear terms the assertion follows.
\end{proof}
\begin{remark}
It is clear that due to better Sobolev embeddings the lower bound 
on $\sigma$ can be improved if $n=2$.
\end{remark}
As before, first we consider the disordered state
(\ref{disorderedstate}).
\begin{theorem}\label{nonlinstapdis}
Let $\Gamma_2>0$ and $\beta>0$. Then the 
disordered state (\ref{disorderedstate}) is nonlinearly
\begin{enumerate}
\item[(1)]{
(globally) exponentially stable in $L^2_\sigma(\mathbb{R}^n)$ 
if $\Gamma_0<0$ and $4\alpha > \Gamma_0^2/\Gamma_2$,
or if $\Gamma_0\ge 0$ and $\alpha> 0$;}
\item[(2)]{
stable in $L^2_\sigma(\mathbb{R}^n)$ if $\Gamma_0<0$ and $4\alpha=
\Gamma_0^2/\Gamma_2$, or if $\Gamma_0\ge 0$ and $\alpha= 0$;}
\item[(3)]{
unstable in $H^{\gamma}(\mathbb{R}^n)\cap L^2_\sigma(\mathbb{R}^n)$ for
$\gamma\in[5/16,1)$ if $\Gamma_0<0$ and $4\alpha <
\Gamma_0^2/\Gamma_2$, or if $\Gamma_0\ge 0$ and $\alpha< 0$.}
\end{enumerate}
\end{theorem}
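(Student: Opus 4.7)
The plan is to handle parts (1) and (2) by standard $L^2$-energy methods applied to the disordered-state specialization of (\ref{eqn:min-hyd-mod-trans}) (so $V=0$, $M=\alpha$, $N\equiv 0$), and part (3) by an application of Henry's instability theorem, Proposition~\ref{thm_henry}. Global existence of strong solutions along which the energy identity is meaningful is supplied by Theorem~\ref{globstrongsol}.

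For parts (1) and (2), I would test the equation with $u$ and integrate over $\mathbb{R}^n$. Since $\operatorname{div} u=0$, the transport term $\lambda_0\langle(u\cdot\nabla)u,u\rangle$ and the pressure term $\langle\nabla q,u\rangle$ vanish, and the cubic term contributes $\beta\|u\|_4^4\ge 0$. With $f\equiv 0$ this gives
\begin{equation*}
  \tfrac{1}{2}\tfrac{d}{dt}\|u\|_2^2
  + \Gamma_2\|\Delta u\|_2^2 + \Gamma_0\|\nabla u\|_2^2 + \alpha\|u\|_2^2
  + \beta\|u\|_4^4 = 0.
\end{equation*}
By Plancherel, the quadratic form $\Gamma_2\|\Delta u\|_2^2+\Gamma_0\|\nabla u\|_2^2+\alpha\|u\|_2^2$ is bounded below by $c_0\|u\|_2^2$, where $c_0$ is the minimum over $s=|\xi|^2\ge 0$ of $\Gamma_2 s^2+\Gamma_0 s+\alpha$, namely $\alpha-\Gamma_0^2/(4\Gamma_2)$ if $\Gamma_0<0$ and $\alpha$ if $\Gamma_0\ge 0$. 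In case (1) one has $c_0>0$ and Gronwall yields $\|u(t)\|_2\le e^{-c_0 t}\|u_0\|_2$, while in case (2) one has $c_0=0$ and the identity immediately gives $\|u(t)\|_2\le\|u_0\|_2$, i.e.\ Lyapunov stability.

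For part (3), I would apply Proposition~\ref{thm_henry} with $X=L^2_\sigma(\mathbb{R}^n)$, $A=A_d$, and $f(u)=-H(u)$, where $H(u)=\beta P|u|^2u+\lambda_0 P(u\cdot\nabla)u$ (note $N\equiv 0$). The equilibrium $x_0=0$ clearly satisfies $Ax_0=f(x_0)=0$, and because $H$ is purely bi-/trilinear one has $B=0$ and $g(z)=-H(z)$. Lemma~\ref{lem:bounded-Hinf-calc} ensures that $-A_d$ generates an analytic $C_0$-semigroup, and via its bounded $H^\infty$-calculus the fractional domain $X^\gamma=D(A_d^\gamma)$ is identified with $H^{4\gamma}(\mathbb{R}^n)\cap L^2_\sigma(\mathbb{R}^n)$ up to equivalent norms. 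For $\gamma\ge 5/16$ the embedding $H^{4\gamma}\hookrightarrow H^{5/4}$ combined with Lemma~\ref{nonlinestinl2} yields $\|g(z)\|_{L^2}\le C\|z\|_{X^\gamma}^2$ near zero (so $s=2>1$) and the local Lipschitz bound on a neighborhood of $0$ in $X^\gamma$. Finally, in case (3) the Fourier computation preceding Proposition~\ref{linstabdis} shows that $\Gamma_2|\xi|^4+\Gamma_0|\xi|^2+\alpha$ is strictly negative on a non-empty band of frequencies; hence $\sigma(-A_d)\cap\{\operatorname{Re} z>0\}\neq\emptyset$, which is precisely the spectral hypothesis of Proposition~\ref{thm_henry} when $B=0$. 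The conclusion delivers the asserted nonlinear instability.

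The main technical point I anticipate is the clean identification of the fractional domain $D(A_d^\gamma)$ with $H^{4\gamma}\cap L^2_\sigma$, without which the exponent $5/16$ is not transparent; this relies on the bounded $H^\infty$-calculus of $\omega+A_d$ from Lemma~\ref{lem:bounded-Hinf-calc} and the standard equivalence of fractional domains with Bessel-potential spaces for operators admitting such calculus. The remaining ingredients — the cancellations in the energy identity, the elementary spectral lower bound on the Fourier side, and the verification of the polynomial-growth and Lipschitz hypotheses of Henry's theorem via Lemma~\ref{nonlinestinl2} — are routine.
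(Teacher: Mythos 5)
Your proposal is correct and follows essentially the same route as the paper: the $L^2$-energy identity for parts (1) and (2) and Henry's instability theorem combined with Lemma~\ref{nonlinestinl2} and the identification $D(A_d^\gamma)=H^{4\gamma}(\mathbb{R}^n)\cap L^2_\sigma(\mathbb{R}^n)$ for part (3). The only (cosmetic) difference is that you obtain the coercivity constant $c_0$ by minimizing the Fourier symbol $\Gamma_2 s^2+\Gamma_0 s+\alpha$ directly, whereas the paper splits into the cases $\Gamma_0\ge 0$ and $\Gamma_0<0$ and absorbs the gradient term via Cauchy--Schwarz and Young; the two computations are equivalent.
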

\begin{proof}
Suppose $u$ and $q$ with regularity as in Proposition~\ref{globstrongsol}
solve the nonlinear system (\ref{eqn:min-hyd-mod-trans}) corresponding to the
disordered state~(\ref{disorderedstate}), that is
\begin{equation*}
u_t+\Gamma_2\Delta^2 u-\Gamma_0\Delta u+\lambda_0(u\cdot\nabla)u
+(\alpha+\beta|u|^2)u+\nabla q=0.
\end{equation*}
Testing this equation with $u$ we obtain
\begin{equation*}
\frac{1}{2}\frac{d}{dt}\|u\|_2^2
+\Gamma_2\|\Delta u\|_2^2
+\Gamma_0\|\nabla u\|_2^2
+\alpha\|u\|_2^2
+\beta\|u\|_4^4=0.
\end{equation*}
If $\Gamma_0\geq 0$ and $\alpha\geq 0$, all coefficients are nonnegative and
we deduce
\begin{equation*}
\frac{d}{dt}\|u\|_2^2 \leq -2\alpha\|u\|_2^2,
\end{equation*}
which yields
\begin{equation*}
\|u(t)\|_2^2\leq e^{-2\alpha t}\|u_0\|_2^2\qquad (t\geq 0),
\end{equation*}
i.e.\ stability if $\alpha=0$ and exponential stability if $\alpha>0$.

If $\Gamma_0<0$, we use the Plancherel theorem, H\"older's inequality,
and Young's inequality with $\varepsilon$ to estimate the term
\begin{align*}
\|\nabla u\|_2^2
& = \int_{\mathbb{R}^n}|\xi|^2|\hat{u}(\xi)|^2d\xi
    \leq \||\xi|^2\hat{u}(\xi)\|_2 \|\hat{u}\|_2 \\
& = \|\Delta u\|_2 \|u\|_2
    \leq \frac{\varepsilon^2}{2}\|\Delta u\|_2^2 + \frac{1}{2\varepsilon^2}\|u\|_2^2.
\end{align*}
The $\varepsilon^2/2$-term can be absorbed by the $\Gamma_2$-term if we
choose $\varepsilon^2=2\Gamma_2/|\Gamma_0|$. Dropping the $\beta$-term as well
we are left with
\begin{equation*}
\frac{1}{2}\frac{d}{dt}\|u\|_2^2
+\alpha\|u\|_2^2 \leq \frac{\Gamma_0^2}{4\Gamma_2}\|u\|_2^2.
\end{equation*}
As before, this implies stability if $4\alpha=\Gamma_0^2/\Gamma_2$ and
exponential stability if $4\alpha>\Gamma_0^2/\Gamma_2$.
Thus assertions (1) and (2) are proved.

To see instability we apply Proposition~\ref{thm_henry}. In our
situation we have $x_0=0$, $B=0$, $x=u$, $A=A_d$, and
$f(u)=H(u)$. 
From this we also see that $H(0)=0$, i.e., that $g=f=H$ in Henry's
notation. Note that Proposition~\ref{linstabdis}(3) under the 
assumption (3) above implies that
$\sigma(-A_d)\cap \{z\in\mathbb{C}:\ \mathrm{Re\,} z>0\}\neq \emptyset$.
Next, the fact that $\omega+A_d$ admits a bounded 
$H^\infty$-calculus for some $\omega>0$ (Lemma~\ref{lem:bounded-Hinf-calc})
yields
\[
	D(A_d^\gamma)=\left[L^2_\sigma(\mathbb{R}^n),D(A_d)\right]_\gamma
	=H^{4\gamma}(\mathbb{R}^n)\quad (\gamma\in[0,1]),
\]
where $[\cdot,\cdot]_s$ denotes the complex interpolation 
space, cf.\ \cite{triebel}. 
Lemma~\ref{nonlinestinl2} then implies that the assertions of 
Proposition~\ref{thm_henry} are fulfilled 
for $\gamma\in[5/16,1)$ and $s=2$.
Hence the disordered state is unstable. 
\end{proof}
Finally we consider instability for the ordered polar state.
\begin{theorem}\label{nonlinstabord}
Let $\Gamma_2,\beta>0$ and $\Gamma_0, \alpha<0$. Then the
ordered polar state (\ref{orderedstate}) is nonlinearly unstable
in $H^{\gamma}(\mathbb{R}^n)\cap L^2_\sigma(\mathbb{R}^n)$ for $\gamma\in[5/16,1)$. 
\end{theorem}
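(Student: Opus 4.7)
The plan is to follow the blueprint of part~(3) of Theorem~\ref{nonlinstapdis} and apply Henry's instability result (Proposition~\ref{thm_henry}) to the transformed system~(\ref{eqn:min-hyd-mod-trans}) with the parameter choice~(\ref{valuesos}). The equilibrium $u=0$ of that system corresponds, via $v = u + V$, to the ordered polar state $(V, p_0)$, so instability of $u = 0$ will transfer directly.

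I would set $X = L^2_\sigma(\mathbb{R}^n)$, $x_0 = 0$, $A = A_o$, $B = 0$, and $f(u) = g(u) = H(u)$, with $H$ as in~(\ref{defnonlins}) and $N(u) = -\beta |u|^2 V - 2\beta (u\cdot V)u$. Every summand of $H$ is at least quadratic in $u$, so $H(0) = 0$ and no linear perturbation $B$ needs to be split off. The spectral hypothesis of Proposition~\ref{thm_henry} is delivered by Proposition~\ref{linstabord}(1): under $\Gamma_0 < 0$ and $\alpha < 0$ one has $\sigma(-A_o) \cap \{\mathrm{Re\,} z > 0\} \neq \emptyset$. Lemma~\ref{lem:bounded-Hinf-calc} (applied with $p=2$) gives a bounded $H^\infty$-calculus for $\omega + A_o$ on $L^2_\sigma(\mathbb{R}^n)$, hence by complex interpolation
\[
  D(A_o^\gamma) = [L^2_\sigma(\mathbb{R}^n), D(A_o)]_\gamma = H^{4\gamma}(\mathbb{R}^n) \cap L^2_\sigma(\mathbb{R}^n), \qquad \gamma \in [0,1],
\]
exactly as in the disordered case.

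For the growth bound on $H$, Lemma~\ref{nonlinestinl2} already controls $\lambda_0 P(u\cdot\nabla)u$ and $\beta P |u|^2 u$. The only new contribution coming from the ordered-state choice of $N$ is $P(-N(u)) = \beta P(|u|^2 V) + 2\beta P((u\cdot V)u)$, which is quadratic in $u$ and derivative-free; for these terms the estimate
\[
  \||u|^2 V\|_2 + \|(u\cdot V)u\|_2 \leq C|V|\, \|u\|_4^2 \leq C|V|\, \|u\|_{H^\sigma}^2
\]
holds for any $\sigma > n/4$ by Sobolev embedding, which is weaker than the threshold $\sigma \geq 5/4$ already needed in Lemma~\ref{nonlinestinl2}. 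Combining both, $\|H(u)\|_2 \leq C\|u\|_{H^\sigma}^2$ for $\sigma \geq 5/4$ and $\|u\|_{H^\sigma}\leq 1$, so $\|g(z)\| = O(\|z\|_{X^\gamma}^2)$ with exponent $s = 2$ as soon as $4\gamma \geq 5/4$, i.e.\ $\gamma \geq 5/16$. Proposition~\ref{thm_henry} then yields nonlinear instability of $u = 0$ in $H^{4\gamma}(\mathbb{R}^n)\cap L^2_\sigma(\mathbb{R}^n)$ for $\gamma \in [5/16, 1)$, and retranslating by $V$ gives the assertion for the ordered polar state.

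I do not expect a serious obstacle here, since essentially every ingredient is already in place: the spectral information from Proposition~\ref{linstabord}, the domain identification from Lemma~\ref{lem:bounded-Hinf-calc}, and the nonlinear estimate from Lemma~\ref{nonlinestinl2}. The one point requiring a short check is that the extra terms in $N(u)$ tied to the ordered state satisfy the same quadratic bound in the norm of $X^\gamma$; as noted above these terms are actually easier to estimate than the Navier--Stokes convective term, since they carry no derivatives. Also worth noting is that, unlike the disordered case, the semigroup $e^{-tA_o}$ is not analytic in the usual centered sense because of the advective term $i\lambda_0 V\cdot \xi$ in its symbol; however, the shifted operator $\omega + A_o$ from Lemma~\ref{lem:bounded-Hinf-calc} is sectorial with $H^\infty$-angle strictly below $\pi/2$, which is all that Henry's theorem requires.
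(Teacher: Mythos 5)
Your proposal is correct and follows essentially the same route as the paper, which simply declares the proof ``analogous to Theorem~\ref{nonlinstapdis}(3)'' based on Lemma~\ref{nonlinestinl2} and Proposition~\ref{linstabord}(1); you have filled in exactly the intended details (Henry's theorem with $A=A_o$, $B=0$, $g=H$, the domain identification $D(A_o^\gamma)=H^{4\gamma}\cap L^2_\sigma$, and the observation that the extra derivative-free quadratic terms from $N(u)$ in the ordered case satisfy an even easier bound). The only cosmetic quibble is your phrasing that $e^{-tA_o}$ is ``not analytic'': it is analytic by Lemma~\ref{lem:bounded-Hinf-calc} and Corollary~\ref{max_reg}, merely not a \emph{bounded} analytic semigroup, but your conclusion that Henry's hypotheses are met is unaffected.
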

\begin{proof}
Based on Lemma~\ref{nonlinestinl2} and Proposition~\ref{linstabord}(1)
the proof is analogous to the proof of Theorem~\ref{nonlinstapdis}(3).
\end{proof}
\begin{remark}
We have seen in Theorem~\ref{nonlinstapdis} that for the disordered
steady state the results on linear
(in-) stability in principle completely transfer to the nonlinear situation. 
Note that for the time being it is not clear if for $\Gamma_0\ge 0$
the linear stability for the ordered polar state 
given by Proposition~\ref{linstabord}(2) transfers
to the nonlinear situation as well. 
Proceeding as for the disordered state, i.e., employing energy methods,
it appears that for the ordered state 'disturbing' terms on the right hand side
can not easily be absorbed by the 'good' terms on the left hand side by
just applying Young, Sobolev and H\"older. It seems that here 
a refined analysis is required which, however, is left as a future
challenge.
\end{remark}

\section{Conclusions and future developments}\label{concl_fut_dev}

In conclusion, we have shown that the model proposed by Wensink et al.\
in \cite{Wensink-et-al:Meso-scale-turbulence} gives rise to a
mathematically wellposed system which reflects the asymptotic
behavior observed in simulations and experiments.
In detail we have proved: 
\begin{enumerate}
\item[(i)] existence of a unique local-in-time solution for
initial data in $W^{4-4/p}_p\cap L^p_\sigma(\mathbb{R}^n)$;
\item[(ii)] existence of a unique global strong solution for
arbitrary initial data in $H^2(\mathbb{R}^n)\cap L^2_\sigma(\mathbb{R}^n)$;
\item[(iii)] results on stability and instability of the ordered
and the disordered steady states in the $L^2$-setting 
depending on the values of the occurring physically relevant parameters.
\end{enumerate}
Note that (ii) is in contrast to the (mathematical) situation of the 
classical Navier-Stokes system. The fact that we
can prove existence of a unique global strong solution here, of course
essentially relies on the presence of the fourth order term 
in (\ref{eqn:min-hyd-mod}) which
provides sufficient regularity.

The intention of this note is to give an analytical approach in 
the $L^2$-setting which serves as a first step for 
further thorough examinations in several directions. 
Future work should address the following problems: first of all,
it should also be mentioned that the generalized Navier-Stokes
framework was augmented even further by including further higher derivatives
in the velocities entering into the stress tensor,
see the recent work by Slomka and Dunkel
\cite{Slomka_EPJST_2015}. We anticipate that our analysis can
also be performed in this more
general case provided the highest order term has the correct stabilizing sign.

Second, another more complex problem is that of boundary conditions
for the fluid velocity field. This is important to take into
account walls and obstacles which confine the bacterial flow.
In fact, recent experiments with bacterial turbulence were
performed with mobile wedge-like
obstacles or carriers \cite{Kaiser1,Kaiser2,Kaiser3}
which were powered by activity
or in static meso-structured environments \cite{Wioland_2016} where
there is an interesting competition between the geometric structure
of the boundaries and the swirling. Recent observations have also addressed
spheres as passive additives to steer bacterial turbulence. For all these
interesting set-ups
the mathematical analysis should include boundary conditions.
The latter are typically stick or slip or involve a finite slipping length.
Therefore the  wellposedness and stability of the generalized
Navier-Stokes equation
in nontrivial boundaries  should be addressed in future studies.

Third, some bacteria move on the surface of an emulsion droplet,
which has motivated recent studies of active particles
on a compact manifold, such as a sphere
\cite{Henkes_2015,Grossmann,Li}. It would be interesting to
generalize the hydrodynamic model (\ref{eqn:min-hyd-mod}) 
towards a
nonplanar geometry and to prove
wellposedness and stability for the problem on curved space.


\bibliography{living_fluids}
\bibliographystyle{plain}

\end{document}